\numberwithin{equation}{section}
 \newcommand{\set}[1]{\left\{#1\right\}}
\newcommand{\bigset}[1]{\bigl\{ #1 \bigr\}}
\newcommand{\Bigset}[1]{\Bigl\{ #1 \Bigr\}}
\newcommand{\biggset}[1]{\biggl\{ #1 \biggr\}}
\newcommand{\abs}[1]{\left| #1\right|}
\newcommand{\bigabs}[1]{\bigl| #1 \bigr|}
\newcommand{\Bigabs}[1]{\Bigl| #1 \Bigr|}
\newcommand{\biggabs}[1]{\biggl| #1 \biggr|}
\newcommand{\floor}[1]{\left\lfloor #1 \right\rfloor}
\newcommand{\brac}[1]{\left( #1 \right)}
\newcommand{\bigbrac}[1]{\bigl( #1 \bigr)}
\newcommand{\Bigbrac}[1]{\Bigl( #1 \Bigr)}
\newcommand{\biggbrac}[1]{\biggl( #1 \biggr)}
\newcommand{\norm}[1]{\left\| #1\right\|}
\newcommand{\rd}{\,\mathrm{d}}
\newcommand{\twosum}[2]{ \sum_{\substack{#1\\ #2}}}
\newcommand{\threesum}[3]{ \sum_{\substack{#1\\ #2\\ #3}}}
\newcommand{\N}{\mathbb{N}}
\newcommand{\Z}{\mathbb{Z}}
\newcommand{\Q}{\mathbb{Q}}
\newcommand{\R}{\mathbb{R}}
\newcommand{\C}{\mathbb{C}}
\newcommand{\T}{\mathbb{T}}
\newcommand{\E}{\mathbb{E}}
\newcommand{\eps}{\varepsilon}
\let\@@pmod\pmod
\DeclareRobustCommand{\pmod}{\@ifstar\@pmods\@@pmod}
\def\@pmods#1{\mkern4mu({\operator@font mod}\mkern 6mu#1)}
\newcommand{\poly}{\mathrm{poly}}
\newcommand{\lip}{\mathrm{Lip}}
\newcommand{\mrd}[1]{\,(\mathrm{mod}\,#1)}
\renewcommand{\bar}{\overline}
\renewcommand{\leq}{\leqslant}
\renewcommand{\geq}{\geqslant}
\renewcommand{\epsilon}{\varepsilon}
\newtheorem{theorem}{Theorem}[section]
\newtheorem{proposition}[theorem]{Proposition}
\newtheorem{lemma}[theorem]{Lemma}
\theoremstyle{definition}
\newtheorem{definition}[theorem]{Definition}
\newtheorem*{remark}{Remark}
\numberwithin{theorem}{section}
\begin{document}

\title[Discorrelation of multiplicative functions with nilsequences]{Discorrelation of multiplicative functions with nilsequences and its application on  coefficients of automorphic $L$-functions }

\author{Xiaoguang He}

\address{College of Mathematical Sciences, Sichuan University, Chengdu, Sichuan 610016,
PR China}

\email{hexiaoguangsdu@gmail.com}

\author{Mengdi Wang}

\address{Department of Mathematics, KTH, Stockholm 100 40, Sweden}

\email{mengdiw@kth.se}

\subjclass[2020]{11L07, 11F30, 37A44}
\keywords{multiplicative functions, automorphic $L$-functions, nilsequences}

\maketitle

\begin{abstract}
We introduce a class of multiplicative functions in which each function satisfies some statistic conditions, and then prove that the above functions are not correlated with finite degree polynomial nilsequences. Besides, we give two applications of this result. One is that the twisting of coefficients of automorphic $L$-function on $GL_m (m \geq 2)$ and polynomial nilsequences has logarithmic decay; the other is that the mean value of the M\"obius function, coefficients of automorphic $L$-function and polynomial nilsequences also has logarithmic decay.

\end{abstract}

\section{Introduction}

Let $a(n),b(n):\N \to\C$ be two complex valued sequences. Following the literature \cite{GT12b}, we say they  are ``strongly orthogonal" if
\[\sum_{n\leq N}a(n)b(n)=O_A\Bigbrac{(\log N)^{-A}\sum_{n\leq N}|a(n)b(n)|},
\]
holds for any $A>0$   and  uniformly for $N\geq 2$.  The study of the correlation  between two sequences is an important subject in number theory. Here, we list some well-known pieces of literature, taking one of the sequences being the M\"obius function as an example.
\begin{enumerate}[$\bullet$]
\item For $(a(n),b(n))=(\mu(n),1)$, the strong orthogonality $\sum_{n\leq N}\mu(n)\ll Ne^{-c\sqrt{ \log N}}$ is essentially equivalent to the prime number theorem;
	\item For $(a(n),b(n))=(\mu(n),e(n\alpha))$, this is a classical result due to Davenport \cite{Dav}, he proved a strongly orthogonal result by modifying Vinogradov's method on bilinear forms;
	\item For $(a(n),b(n))=(\mu(n),e(n^k\alpha))$, Hua \cite{Hua} generalizes Davenport's strong orthogonality result from linear phase functions to polynomial phase functions;
	\item For $(a(n),b(n))=(\mu(n),F(g(n)\Gamma))$, where $F(g(n)\Gamma)$ is a nilsequence, Green and Tao \cite{GT12b} prove that the M\"obius function is strongly orthogonal to polynomial nilsequences.
\end{enumerate}

 In this paper, we consider the correlation of nilsequences with a class of multiplicative functions. And in the rest of this section, we would pay attention to the historical developments that are more relevant to the theme of this paper.

Suppose that $f:\N\to\C$ is a 1-bounded multiplicative complex valued function, Daboussi \cite{Dab} proves that for any irrational frequency $\alpha\in\R/\Z(=\T)$,
\[
S(f,\alpha):=\frac{1}{N}\sum_{n\leq N}f(n)e(n\alpha)=o(1).
\]
 Montgomery and Vaughan \cite{MV} consider a much more general class of multiplicative functions. They relax the $1$-bounded condition to the below two conditions
\begin{equation}\label{bound-prime}
	|f(p)|\leq A, \textrm{ for all primes} \ p,
\end{equation}
and
\begin{equation}\label{bound-l2}
	\sum_{n\leq N}|f(n)|^2\leq A^2N,  \text{ for large  natural numbers} \ N,
\end{equation}
where $A\geq 1$ is some constant. More formally, they prove that if $\norm{q\alpha}\leq\frac{1}{q}$ for some integer $1\leq q\leq N(\log N)^{-3}$,  then for every function $f$ satisfies conditions (\ref{bound-prime}) and (\ref{bound-l2}), we have
\[
S(f,\alpha)\ll \frac{1}{\sqrt{\phi(q)}}+\frac{1}{\log N}.
\]
  Taking note that when $q\geq(\log N)^{2+\eps}$, the above upper bound would be determined by the second term. Thus, when the function $f$ retains reasonable decay in arithmetic progressions with modulus less than $(\log N)^{2+\eps}$, we can expect that for any frequency $\alpha\in\T$,
\begin{align}\label{aim}
\frac{1}{N}\sum_{n\leq N}\bigbrac{f(n)-\E_{[N]}f}e(n\alpha)\ll (\log N)^{-1},
\end{align}
 where we have written $\E_{[N]}f=\E_{n\in[N]}f(n)=\frac{1}{N}\sum_{n\leq N}f(n)$ as the average of $f$ on the discrete interval $[N]=\set{1,2,\dots,N}$. Recently, Jiang, L\"{u} and Wang \cite{JLW} weaken the above condition (\ref{bound-prime}) that $f$ takes bounded values in primes to the following two conditions
\begin{equation}\label{p-l2}
	\sum_{p\leq N}|f(p)|^2\log p\ll N;
\end{equation}
and
\begin{equation}\label{ph-l2}
	\twosum{p\leq N}{p+h\text{ is  prime}}|f(p)f(p+h)|\ll\frac{h}{\phi(h)}\frac{N}{(\log N)^2}
\end{equation}
holds for all positive integers $h$. They can show that when $f$ is under conditions (\ref{bound-l2}), (\ref{p-l2}) and (\ref{ph-l2}), the error term of $S(f,\alpha)$ is similar to the result of Montgomery and Vaughan. Using this result, they also prove that the mean value of coefficients of automorphic $L$-functions and linear phase functions has logarithmic decay (see next subsection).

The studying of the correlation between multiplicative functions and polynomial nilsequence in place of the exponential function $n\to e(n\alpha)$ begins with Green and Tao \cite{GT10}. In that paper, they obtained the asymptotic formula of linear equations in primes using the information that $W$-tricked von Mangoldt function does not correlate with polynomial nilsequences. And very recently, Tao and Ter\"av\"ainen \cite{TT} give the above asymptotic formula a quantitative error term.

 In order to make this paper clear and readable, we are planning to record the definition of nilsequences and related notations here, and refer to \cite{GT12a} for more detailed introductions.

 \begin{definition}
	Let $G$ be a connected, simply-connected nilpotent Lie group, and let $\Gamma\leq G$ be a lattice. By a \emph{ filtration} $G_{\bullet}=(G_i)_{i=0}^{\infty}$ on $G$, we mean a descending sequence of groups $G=G_1\supseteq G_2\supseteq\cdots\supseteq G_d\supseteq G_{d+1}=\{\mathrm{id}_G\}$ such that \begin{equation}\label{EqFiltration}[G,G_{i-1}]\subseteq G_i, \forall i\geq 2.\end{equation} This actually implies $[G_i,G_j]\subseteq G_{i+j}$ for all $i, j\geq 1$, and  $\Gamma_i:=\Gamma\cap G_i$ is a lattice in $G_i$ for $i\geq 0$. The number $d$ is the \emph{ degree} of the filtration $G_\bullet$.  The \emph{ step} $s$ of $G$ is the degree of the lower central filtration defined by $G_{i+1}=[G,G_i]$. A lattice $\Gamma$ must be cocompact, and the compact quotient $G/\Gamma$ is called a \emph{ nilmanifold}.

	We say that $g$ is a \emph{ polynomial sequence} with coefficients in $G_{\bullet}$, and write $g \in \poly(\Z,G_{\bullet})$, if $g:\mathbb{Z}\to G$ satisfies the derivative condition
	\begin{align*}
		\partial_{h_1}\cdots \partial_{h_i}g(n) \in G_i
	\end{align*}
	for all $i\geq 0$, $n\in \Z$ and all $h_1,\ldots, h_i\in \mathbb{Z}$, where $\partial_h g(n):=g(n+h)g(n)^{-1}$ is the discrete derivative with shift $h$.
	
	The Mal'cev basis $\mathcal{X}$ (see next section) induces a right invariant metric $d_G$ on $G$, which is the largest metric such that $d(x,y)\leq|\psi_\mathcal{X}(xy^{-1})|$ always holds, where $|\cdot|$ denotes the $l^\infty$-norm on $\mathbb{R}^m$, and $\psi_\mathcal{X}:G\to\R$ is the Mal'cev coordinate map which is defined in \cite[(2.1)]{GT12a}. Actually, this in turn induces a metric $d_{G/\Gamma}$ on $G/\Gamma$. For a function $F:G/\Gamma\to\mathbb{C}$, we define its  \emph{ Lipschitz norm} as
	 \begin{equation}\label{DefLip}\|F\|_{\operatorname{Lip}}=\|F\|_{\infty}+\sup_{x,y\in G/\Gamma, x\neq y}\frac{|F(x)-F(y)|}{d_{G/\Gamma}(x,y)}\end{equation}
	  with respect to $d_{G/\Gamma}$.
	
	Finally, if $F:G/\Gamma\to \mathbb{C}$ is a Lipschitz function (that is $\norm{F}_{\lip}<\infty$), we call a sequence of the form $n \mapsto F(g(n)\Gamma)$ a \emph{nilsequence}.
\end{definition}

An interesting question is to generalize the above-described classical results on linear phase functions from Daboussi, Montgomery and Vaughan and so on, to nilsequences. Frantzikinakis and Host \cite[Theorem 2.2]{FH} generalized Daboussi's consideration to nilsequence cases, and they can prove a qualitative result as follows.

\begin{theorem}[Daboussi for nilsequences]\label{FH}
Let $G/\Gamma$ be a nilmainifold with finite dimension and $G_\bullet$ be a finite degree filtration of $G$. Suppose that $g\in\poly(\Z,G_\bullet)$ is a polynomial sequence and $\bigbrac{g(n)\Gamma}_{n\in\N}$ is totally 	equidistributed\footnote{An infinite sequence $(g(n)\Gamma)_{n\in\N}$ in $G/\Gamma$ is said to be totally equidistributed if for any integers $0\neq a\in\Z$ and $r\in\Z$ and any continuous function $F:G/\Gamma\to\C$, we have
\[
\lim_{n\to\infty}\E_{n\in[N]}F(g(an+r)\Gamma)=\int_{G/\Gamma}F.
\]
Here, the symbol $\int_{G/\Gamma}$ will stand for integration with respect to the unique left invariant probability measure on $G/\Gamma$.}
 in $G/\Gamma$, then for any 1-bounded multiplicative function $f:\Z\to\C$ and any continuous function $F:G/\Gamma\to\C$ with $\int_{G/\Gamma}F=0$, we have
\[
\lim_{N\to\infty}\E_{n\in[N]}f(n)F(g(n)\Gamma)=0.
\]
\end{theorem}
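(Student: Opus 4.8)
The plan is to combine an orthogonality criterion of Kátai and Bourgain--Sarnak--Ziegler type with the equidistribution theory of polynomial orbits on nilmanifolds (Leibman's theorem, or the quantitative version of Green--Tao). Since any continuous $F\colon G/\Gamma\to\C$ with $\int_{G/\Gamma}F=0$ is a uniform limit of Lipschitz functions of integral $0$, and $n\mapsto f(n)F(g(n)\Gamma)$ is $1$-bounded, we may assume $F$ is Lipschitz. The orthogonality criterion then reduces the desired conclusion
\[
\E_{n\in[N]}f(n)F(g(n)\Gamma)\longrightarrow 0\qquad(\text{for all }1\text{-bounded multiplicative }f)
\]
to the estimate
\[
\lim_{w\to\infty}\ \limsup_{N\to\infty}\ \frac{1}{\pi(w)^2}\twosum{p,q\leq w}{p\neq q\text{ prime}}\Bigabs{\E_{n\in[N]}F(g(pn)\Gamma)\,\overline{F(g(qn)\Gamma)}}=0 ,
\]
so it suffices to control, for distinct primes $p,q$, the quantity $c_{p,q}:=\limsup_{N\to\infty}\bigabs{\E_{n\in[N]}F(g(pn)\Gamma)\overline{F(g(qn)\Gamma)}}$. (The pointwise statement ``$c_{p,q}\to 0$ for every pair'' is already false on the circle, which is why the averaged form is essential.)

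Fix distinct primes $p,q$. The sequence $n\mapsto(g(pn),g(qn))$ lies in $\poly(\Z,(G\times G)_\bullet)$ for the product filtration, so by Leibman's theorem it equidistributes in a sub-nilmanifold $Y_{p,q}\subseteq(G/\Gamma)\times(G/\Gamma)$, whence $c_{p,q}=\bigabs{\int_{Y_{p,q}}F\otimes\bar F}$. By Leibman's equidistribution criterion, $Y_{p,q}=(G/\Gamma)^2$ as soon as the projection of $(g(pn),g(qn))$ to the maximal torus factor $\T^k\times\T^k$ of $(G/\Gamma)^2$ (with $\T^k=G/[G,G]\Gamma$) equidistributes, and in that case $c_{p,q}=\bigabs{\int_{G/\Gamma}F}^2=0$. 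Writing the torus projection of $g$ as $\bar g(n)=\sum_{j}\binom{n}{j}\beta_j$ with $\beta_j\in\R^k$, the total equidistribution of $(g(n)\Gamma)$ is equivalent to: for every nonzero $\xi\in\Z^k$ the polynomial $n\mapsto\xi\cdot\bar g(n)$ has an irrational non-constant coefficient. Hence a \emph{resonance} — a nonzero $(\xi,\eta)\in\Z^{2k}$ for which every non-constant coefficient of $n\mapsto\xi\cdot\bar g(pn)+\eta\cdot\bar g(qn)$ is rational — forces a Diophantine relation linking $p$, $q$ and the fixed reals $\beta_1,\dots,\beta_d$, and only resonant pairs $(p,q)$ contribute to the double sum above.

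For a resonant pair one decomposes $F$ into vertical-frequency components and analyzes $\int_{Y_{p,q}}F\otimes\bar F$ through the structure of $Y_{p,q}$: a nonzero contribution survives only when the two vertical frequencies are matched through the resonance, so $c_{p,q}$ is bounded by a bilinear expression in the Fourier data of $F$ supported on frequency pairs tied to $(p,q)$. In the abelian case this is exactly the inequality $c_{p,q}\leq\sum_{m\neq 0}\bigabs{\hat F(qm)}\,\bigabs{\hat F(pm)}$. Summing over all resonant $p\neq q\leq w$, interchanging the order of summation, applying Cauchy--Schwarz over the (distinct) shifted frequencies, and using the decay $\bigabs{\hat F(\xi)}\ll_F(1+\norm{\xi})^{-1}$ valid for Lipschitz $F$, one obtains $\sum_{p\neq q\leq w}c_{p,q}=O_F(\pi(w))=o(\pi(w)^2)$, which is precisely the input required by the orthogonality criterion.

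The main obstacle is this structural step: identifying the orbit closure $Y_{p,q}$ of $(g(pn),g(qn))$ and, for resonant pairs, extracting from the resonance enough quantitative Diophantine information about $p$ and $q$ to make the sum over primes converge. In the abelian/torus case this amounts to the explicit Fourier computation sketched above; for a general nilmanifold one must propagate the argument through the central extension tower using Mal'cev coordinates and vertical characters, which is exactly where Leibman's (or the quantitative Green--Tao) equidistribution machinery does the real work.
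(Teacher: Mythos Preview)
The paper does not prove this statement; it is quoted as \cite[Theorem~2.2]{FH} and serves only as motivation for the paper's own (quantitative, Montgomery--Vaughan based) Theorem~\ref{main}. So there is no in-paper argument to compare against. That said, your outline is exactly the Frantzikinakis--Host strategy: reduce to Lipschitz $F$, invoke the K\'atai/Bourgain--Sarnak--Ziegler criterion, and control the prime-pair correlations via the joint orbit closure $Y_{p,q}$ of $n\mapsto(g(pn),g(qn))$ in $(G/\Gamma)^2$. Your abelian computation is correct, including the observation that $c_{p,q}$ need not vanish pointwise and that only the averaged criterion is available.

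Where the proposal stops short of a proof is the general nilmanifold step. You assert that for resonant $(p,q)$ one ``decomposes $F$ into vertical-frequency components'' and that ``a nonzero contribution survives only when the two vertical frequencies are matched'', yielding a bilinear bound analogous to $\sum_{m\neq0}|\hat F(qm)|\,|\hat F(pm)|$. None of this is established: you have not identified $Y_{p,q}$ beyond the horizontal torus, not shown why only matched vertical characters survive integration over $Y_{p,q}$, and not supplied any nilmanifold replacement for the Lipschitz decay $|\hat F(\xi)|\ll(1+\|\xi\|)^{-1}$ that makes the abelian sum converge. The closing sentence---that one ``propagates the argument through the central extension tower'' and that this is ``exactly where the machinery does the real work''---is an accurate description of the difficulty, but it is a description, not a proof. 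This structural step is the heart of the Frantzikinakis--Host argument and cannot be waved through.
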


Matthiesen \cite{Mat} consideres a general class of functions which doesn't need the functions to take bounded values at every integer point but bound at primes and higher power of primes. Let $H\geq1$ be a fixed number, $\mathcal M_H$ denote the class of multiplicative functions in which each function $f:\N\to\C$ satisfies the following conditions:
\begin{enumerate}
\item (Bound at prime powers). $|f(p^k)|\leq H^k$ for all prime powers $p^k$;
\item  (Accumulation at primes is positive). There is a number $0<\alpha_f\leq1$ such that the below inequality holds for all large $N$
\[
\frac{1}{N}\sum_{p\leq N}|f(p)|\log p\geq\alpha_f.
\]	
\end{enumerate}
Matthiesen \cite{Mat} then gives a quantitative discorrelation of polynomial nilsequences with $W$-tricked functions from the class $\mathcal M_H$, see \cite[Theorem 6.1]{Mat} for precise statement.

We plan to study another class of multiplicative functions, for which we relax the point-to-point information at  prime powers but instead we need stronger statistics information.

Let $\mathcal M'$ be the class of multiplicative functions $f:\N\to\C$ with the following properties,
\begin{enumerate}
\item (Equidistributed in arithmetic progressions with small moduli). There are relatively prime integers $1\leq b\leq W\ll(\log N)^C$  such that the $W$-tricked function of $f$, named $f(W\cdot+b)$, is equidistributed in long arithmetic progressions. More formally,
\begin{align}\label{w-equi}
\frac{\phi(W)}{W}\biggabs{\E_{n\in P}\biggbrac{f(Wn+b)-\frac{\phi(W)}{WN}\sum_{n\in[N]}f(Wn+b)}}\ll\frac{1}{\log N},
\end{align}
where $P\subseteq [N]$ is an arithmetic progression of length $|P|\gg N/(\log N)^C$.
\item (The $L^2$-norm at primes is bounded).
\begin{align}\label{lp2}
\sum_{p\leq N}|f(p)|^2\log p\ll N.
 \end{align}	
\item (The $L^2$-norm is bounded).
\begin{align}\label{fl2}
\E_{n\in[N]}|f(n)|^2\ll1;
\end{align}
and
\begin{align}\label{wl2}
\frac{\phi(W)}{WN}\sum_{n\leq N}|f(Wn+b')|^2\ll1
\end{align}
for all integers $1\leq b'\leq W$ coprime with $W$.
\end{enumerate}

 \cite[Proposition A.2]{GT12b} shows that the M\"obius function is equidistributed in  progressions with small common differences. It can also been seen from \cite[Proposition 2.2]{TT} that $\Lambda-\Lambda_{\mathrm{Siegel}}$ is equidistributed in progressions with small moduli, where $\Lambda$ is the von Mangoldt function and $\Lambda_{\mathrm{Siegel}}$ is defined in \cite[Definition 2.1]{TT}. When taking $W=1$ in (\ref{w-equi}), the von Mangoldt function itself is not equidistributed in progressions with small moduli, but this difficulty can be overcome by a simple affine change of variables known as $W$-trick, which is introduced in \cite{GT08}. However, for general multiplicative functions, finding such a number $W$ such that $f(W\cdot+b)$ eliminates bias to any residue class with small modulus is  much more complicated, we refer the interested readers to \cite[Section 5]{Mat} for the inspiring readings.

\begin{remark}
When $W\neq1$ and $W\ll(\log N)^C$, for the function $f(W\cdot+b)$, the logarithmic decay in condition (\ref{w-equi}) is, somehow,  also a strict condition, we can weaken this condition to expect a poor error term of Theorem \ref{main}. However, we are extremely interested in how to obtain a result similar to Montgomery and Vaughan (looks like (\ref{aim})) in the nilsequence settings. And this is the reason that we do such a logarithmic decay hypothesis.	
\end{remark}

\begin{remark}
Assumption (\ref{wl2}) may be strange at first glance, we try to defend a bit for it. The application of the approach of Montgomery and Vaughan would reduce the question to considering a sum over $pn$ in an arithmetic progression, where $p$ is a prime. Visually speaking, to handle $\sum_{pn\leq x; pn\equiv b\mrd W}f(p)f(n)e(pn\alpha)$, where $f(n)$ and $f(p)$ may be seen as additive coefficients for the moment. When we aim to remove these coefficients and so to separate variables $p$ and $n$, taking note that the summation is on a reduced residue class,  at least one of the variables $p$ and $n$ needs to run over all of the reduced residue classes modulus $W$, and this leads to  condition  (\ref{wl2}) after applying Cauchy-Schwarz inequality. Moreover, it is easy to verify that when $W=1$ the condition (\ref{wl2}) is indeed (\ref{fl2}).
\end{remark}

\begin{theorem}\label{main}
Let $G/\Gamma$ be a nilmanifold of dimension $m_G\geq1$, $\mathcal X$ be a $M_0$-rational Mal'cev basis adapted to $G/\Gamma$ for some $2\leq M_0\leq \log N$, and $G_\bullet$ be a filtration of $G$ of degree $d\geq1$. Suppose that $g\in\poly(\Z,G_\bullet)$ is a polynomial sequence and $F:G/\Gamma\to\C$ is a 1-bounded Lipschitz function, then for every function $f\in\mathcal M'$, we have
\[
\frac{\phi(W)}{WN}\sum_{n\in[N]}\Bigbrac{f(Wn+b)-\E_f(N;W,b)}F(g(n)\Gamma)\ll_{m_G,d}(1+\norm{F}_{\lip})\frac{1}{\log N},
\]
where we have written $\E_f(N;W,b)=\frac{\phi(W)}{WN}\sum_{n\in[N]}f(Wn+b)$.\end{theorem}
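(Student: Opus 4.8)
The plan is to run the Montgomery--Vaughan bilinear method for multiplicative functions, in the form developed by Jiang, L\"u and Wang \cite{JLW} under purely second-moment hypotheses, but with the linear phase $e(n\alpha)$ replaced throughout by the nilsequence $F(g(n)\Gamma)$; the nilpotent input will be the quantitative equidistribution theory of Green and Tao \cite{GT12a}. First I would reduce to the case $\int_{G/\Gamma}F=0$: writing $F=c+F_0$ with $c=\int_{G/\Gamma}F$, the contribution of the constant $c$ is $c\cdot\frac{\phi(W)}{WN}\sum_{n\in[N]}\bigbrac{f(Wn+b)-\E_f(N;W,b)}$, which by \eqref{w-equi} applied with $P=[N]$ is $\ll|c|/\log N\ll(1+\norm{F}_{\lip})/\log N$. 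Since $\norm{F_0}_{\lip}\ll\norm{F}_{\lip}$, we may henceforth assume $\int_{G/\Gamma}F=0$ and $\norm{F}_{\lip}\le1$.

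Now set $m=Wn+b$ and use the multiplicativity of $f$ to carry out the Montgomery--Vaughan combinatorial reduction: one factors $m=pm'$ with $p$ a prime coprime to $W$ in a suitable window, disposes of the sparse contributions --- the $m$ with no such prime factor, and the $p^2\mid m$ terms --- by Cauchy--Schwarz against the second moments \eqref{fl2} and \eqref{wl2}, and uses \eqref{lp2} to control the remaining error terms and the main term coming from $\E_f(N;W,b)$. This reduces the problem to bounding a bounded number of bilinear sums of the shape
\[
\frac{\phi(W)}{WN}\sum_{p}f(p)(\log p)\sum_{m'}f(m')\,F\Bigbrac{g\bigbrac{\tfrac{pm'-b}{W}}\Gamma}\,w(p,m'),
\]
where $m'$ ranges over the residue class modulo $W$ for which $(pm'-b)/W\in\Z$ and over an interval of length $\gg NW/(P(\log N)^{O(1)})$, $P$ is the size of the $p$-window, and $w$ is $1$-bounded. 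Applying Cauchy--Schwarz in the long variable $m'$ and using \eqref{wl2} for the $L^2$-factor, it is enough to bound
\[
\sum_{p_1,p_2}f(p_1)\overline{f(p_2)}\,(\log p_1)(\log p_2)\sum_{m'}F\bigbrac{g_{p_1}(m')\Gamma}\,\overline{F\bigbrac{g_{p_2}(m')\Gamma}},
\]
where $g_p(m')=g\bigbrac{(pm'-b)/W}$ is the corresponding dilate of $g$ (lying in $\poly(\Z,G_\bullet)$ after the obvious reparametrisation). The diagonal terms $p_1=p_2$ are bounded using $1$-boundedness of $F$ together with \eqref{lp2} (partial summation from $\sum_{p\le N}|f(p)|^2\log p\ll N$), which is acceptable.

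The crux is the off-diagonal $p_1\neq p_2$: the inner sum is the average over $m'$ of the product nilsequence $m'\mapsto(F\boxtimes\overline F)\bigbrac{(g_{p_1}\boxtimes g_{p_2})(m')(\Gamma\times\Gamma)}$ on $(G/\Gamma)^2$, whose integral vanishes since $\int_{G/\Gamma}F=0$. I would apply the Green--Tao equidistribution theorem to $g_{p_1}\boxtimes g_{p_2}$ at a scale $\delta=(\log N)^{-A}$ with $A=A(m_G,d)$ large. Either this orbit is $\delta$-equidistributed in $(G/\Gamma)^2$, in which case the inner average is $\ll\delta^{c}$ for some $c=c(m_G,d)>0$ and, after the summation over pairs and the normalisation, this beats $1/\log N$; or there is a nonzero horizontal character $(\eta_1,\eta_2)$ of $(G/\Gamma)^2$ of bounded complexity with $\norm{\eta_1\circ g_{p_1}+\eta_2\circ g_{p_2}}_{C^\infty}$ small. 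Since each $g_{p_i}$ is a dilate of $g$ by $p_i$, matching Taylor coefficients turns the latter into an almost-linear relation, with small-height rational coefficients, between dilates of fixed horizontal characters of $g$; this forces either that $(p_1,p_2)$ lies in an exceptional set thin enough that the trivial bound suffices, or that some $\eta\circ g$ is itself close to a polynomial of denominator $q\ll(\log N)^{O(1)}$ --- the ``major-arc'' case. In the major-arc case one splits according to $n\bmod q$: on each class $n\equiv r\md q$ the factorization theorem writes $F(g(n)\Gamma)=\tilde F_r(g'(n)\Gamma')+O(1/\log N)$ on a long sub-progression, with $g'$ totally equidistributed on a nilmanifold $G'/\Gamma'$ of strictly smaller dimension (or a point) and $\norm{\tilde F_r}_\infty\le1$. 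If $G'/\Gamma'$ is a point one concludes directly from \eqref{w-equi} with modulus $Wq$; otherwise, the change of variables $n\mapsto qk+r$ recasts the restricted sum as an instance of the theorem with $W$ replaced by $Wq$ and $G$ by $G'$, and one closes by induction on $\dim G$, keeping track of the $L^2$-masses of $f$ on the residue classes modulo $Wq$.

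The main obstacle is this off-diagonal estimate: establishing the two-parameter equidistribution of the dilated pair $(g_{p_1},g_{p_2})$ on $(G/\Gamma)^2$ for $p_1\neq p_2$, and, inseparably from it, calibrating the quantitative parameters --- taking the equidistribution scale $\delta$ to be a negative power of $\log N$ large enough that the minor-arc off-diagonal contribution genuinely beats $1/\log N$ after all Cauchy--Schwarz and summation losses, while keeping the resulting major-arc modulus $q$, and all moduli accumulated along the induction on $\dim G$, below the threshold $(\log N)^C$ at which \eqref{w-equi} is available (which forces $C$ to be taken large in terms of $m_G$ and $d$). Everything else is routine bookkeeping of error terms via \eqref{fl2}, \eqref{wl2} and \eqref{lp2}.
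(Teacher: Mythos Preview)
Your overall plan---run Montgomery--Vaughan, apply Cauchy--Schwarz, and control the off-diagonal via equidistribution of the product orbit $(g_{p_1},g_{p_2})$ on $(G/\Gamma)^2$---matches the paper's, but there are two substantive divergences worth flagging.

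\textbf{Architecture: factorise first, not induct later.} You run Montgomery--Vaughan directly on $g$ and, when the bilinear step hits a major-arc obstruction ($\eta\circ g$ nearly rational), propose to pass to a sub-nilmanifold $G'/\Gamma'$ and induct on $\dim G$, replacing $W$ by $Wq$. The paper does the opposite: it applies the Green--Tao factorisation theorem $g=\varepsilon g'\gamma$ \emph{once, at the outset}, and uses condition \eqref{w-equi} to dispose of the smooth and rational pieces. After this, the working sequence $g'$ is already totally $M^{-A}$-equidistributed, so the major-arc branch in the bilinear step simply never occurs (it contradicts equidistribution of $g'$). Your inductive route is hazardous here because the class $\mathcal M'$ is tied to a \emph{fixed} $W$: condition \eqref{wl2} is only assumed for residues modulo $W$, and there is no mechanism to propagate it to residues modulo $Wq$ when $q$ arises mid-proof. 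The paper sidesteps this by never changing $W$---the progressions $P_{i,j}$ coming from the factorisation are carried as an outer sum $\sum_{i,j}$ throughout, and the $L^2$ hypotheses are only ever invoked for the original $W$.

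\textbf{The large-prime range.} More concretely, you always apply Cauchy--Schwarz in the ``long'' variable $m'$. But in the Montgomery--Vaughan identity $\log n=\sum_{d\mid n}\Lambda(d)$ the prime $p$ is unrestricted, and when $p>N^{2/3}$ the companion variable has length $\ll N^{1/3}$, so it is no longer the long one. The paper splits at $U=N^{2/3}$: for $p\le U$ it does exactly your Cauchy--Schwarz in $n$, doubling $p$ (Lemma~\ref{pissmall}); but for $p>U$ it must Cauchy--Schwarz in $p$, doubling $n$, and the inner sum becomes $\sum_{p\sim U_k}\log p\cdot \widetilde F_{i,j}\bigl(g_{i;n,n'}(p)\Lambda_i\times\Lambda_i\bigr)$---a correlation of the von Mangoldt function with a product nilsequence. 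Showing this is small when $(g_{i;n,n'})$ is equidistributed is a genuinely separate input (Lemma~\ref{Mangoldt}), proved via Vaughan's identity and Type~I/II estimates in the style of \cite{GT12b}. Your proposal contains no analogue of this step, and without it the range $p>N^{2/3}$ is not covered.
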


\subsection*{Coefficients of automorphic $L$-functions twisted by nilsequences}

Inspired from the argument of \cite{JLW}, we aim to apply Theorem \ref{main} to the coefficients of automorphic $L$-functions. We begin with a brief introduction to the coefficients of automorphic $L$-functions.

 Let $m\geq 2$ be an integer and $\pi$ be an automorphic irreducible cuspidal representation of $GL_m$ over $\Q$ with unitary central character. Denote by $\lambda_\pi (n)$ the Dirichlet coefficients of automorphic $L$-function $L(s,\pi)$ attached to $\pi$. Then $\lambda_\pi(n)$ is a multiplicative function. We now write $S_1(\alpha,N)$ as the sum of $\lambda_\pi$ with the linear phase function, i.e.
 \[
 S_1(\alpha,N)=\sum_{n\leq N}\lambda_\pi(n)e(n\alpha).
 \]
 The Ramanujan conjecture asserts that
\[|\lambda_\pi(p)|\leq m\]
for all primes $p$, and this means that $\lambda_\pi(n)$ satisfy the condition (\ref{bound-prime}) under Ramanujan conjecture. Besides, in the paper of Jiang, L\"u and Wang \cite{JLW}, they proved that $\lambda_\pi(n)$ also satisfy condition (\ref{bound-l2}), thus, under Ramanujan conjecture, the result given by Montgomery and Vaughan can be applied to $\lambda_\pi(n)$. Whilst, we also concerne about the upper bound of $S_1(\alpha,N)$ without any sharp hypothesis.

 When $m=2$, $\lambda_\pi(n)$ are the normalised coefficients of a modular form or Maass form, and in this case, \cite[Theorem 8.1]{Iwa} shows that
\[S_1(\alpha,N)\ll N^{\frac{1}{2}}(\log N).\]
We now view $S_1(\alpha,N)$ as the Fourier coefficient of $\lambda_\pi$ at frequency $\alpha$ temporarily, then using of Parseval's identity and the Rankin-Selberg theory yields that
\[\int_0^1|S_1(\alpha,N)|^2d\alpha=\sum_{n\leq N}|\lambda_\pi(n)|^2\sim N,\]
which means that, on average, $S_1(\alpha,N)$ demonstrates square-root cancellation. As a consequence, one cannot expect above exponent $\frac{1}{2}$ could be reduced for all frequencies $\alpha$.

When $m=3$, Miller \cite{Mil} proved that \[S_1(\alpha,N)\ll N^{\frac{3}{4}+\varepsilon},\]
by using of the Vorono\"i summation for $GL_3$. For general $m\geq 4$, Jiang, L\"u and Wang \cite[Theorem 1.2]{JLW} proved that
\[S_1(\alpha,N)\ll \frac{N}{\log N}.\]
Let's give a quick sketch of their proof. In order to get the desired bound,  they firstly split frequencies $\alpha\in\T$ into the so-called $major \ arcs$ and $minor \ arcs$. When $\alpha$ belongs to $major \ arcs$, they can get good bounds for $S_1(\alpha, N)$ much easier since the good behavior of $\lambda_\pi$ in progressions with small modulus. As for those frequencies in $minor \ arcs$, they employ their Theorem 1.1 in  \cite{JLW} to the coefficients $\lambda_\pi(n)$, thus, reduce the question  to verifying conditions (\ref{bound-l2}), (\ref{p-l2}) and (\ref{ph-l2}) for coefficients $\lambda_\pi(n)$. And, the main business of their paper is to handle the much harder condition (\ref{ph-l2}).  It is worth pointing out that the logarithmic decay for $S_1(\alpha,N)$ in their result is derived from the approach of Montgomery and Vaughan. And we believe it is a difficult work to gain any power saving result for $S_1(\alpha,N)$, and also needs more clever ideas.

There are also several pieces of literature concerning the correlation of coefficients $\lambda_\pi(n)$ and polynomial phase functions even nilsequences, such as the sum
\[
S_k(\alpha,N)=\sum_{n\leq N}\lambda_\pi(n)e(n^k\alpha).
\]
Cafferata, Perelli and Zaccagnini \cite{CPZ} extend the criterion of Bourgain--Sarnak--Ziegler by relaxing some conditions in the criterion to make it efficient for more multiplicative functions, then they applied this (generalized) criterion to holomorphic cusp form $\pi$ on $GL_2$ and prove that
\[S_k(\alpha,N)\ll N\frac{\log\log N}{\log N}.\]
Later, Jiang and L\"u \cite{JL} extend the above result to $GL_m$ automorphic cuspidal representation $\pi$ under Hypothesis H (Hypothesis H was stated by Rudnick and Sarnak \cite{RS}).   Afterward, the Hypothesis H is removed by Jiang and L\"u in a later paper \cite[Lemma 5.3]{JL21}.

As for general nilsequence settings, the only literature we can find is the paper of  Matthiesen \cite{Mat}. Thanks to her Theorem 6.1, one can show that, for each holomorphic cusp form $\pi$ on $GL_2$, the mean value of $\lambda_\pi(n)$ and polynomial nilsequences has some quantitative  decay.

Our second result of this paper is to extend the previous consideration to the correlation of general  $GL_m (m\geq 2)$ automorphic cuspidal representation $\pi$ over $\Q$ with polynomial nilsequence.

\begin{theorem}\label{lfunction}
Let $G/\Gamma$ be a nilmanifold of dimension $m_G\geq1$, $\mathcal X$ be a $M_0$-rational Mal'cev basis adapted to $G/\Gamma$ for some $2\leq M_0\leq \log N$, and $G_\bullet$ be a filtration of $G$ of degree $d\geq1$. Suppose that $g\in\poly(\Z,G_\bullet)$ is a polynomial sequence and $F:G/\Gamma\to\C$ is a 1-bounded Lipschitz function, then  we have
	\[
	\frac{1}{N}\sum_{n\in[N]}\lambda_\pi(n)F(g(n)\Gamma)\ll_{m_G,d,\pi}\bigbrac{1+\norm{F}_{\lip}}\frac{1}{\log N}.
	\]
\end{theorem}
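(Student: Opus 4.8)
The plan is to derive Theorem~\ref{lfunction} from Theorem~\ref{main} by verifying that $\lambda_\pi\in\mathcal M'$ with the choice $W=b=1$; then the $W$-tricked function is, up to the harmless shift $n\mapsto n+1$, just $\lambda_\pi$ itself, and condition~\eqref{wl2} collapses to~\eqref{fl2} as observed in the excerpt. So the task reduces to checking the three conditions \eqref{w-equi}, \eqref{lp2} and \eqref{fl2} for $f=\lambda_\pi$. It is worth noting that, unlike the argument of~\cite{JLW}, the delicate Chowla-type input~\eqref{ph-l2} plays no role here, so this verification rests only on classical facts about automorphic $L$-functions.

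For~\eqref{lp2} and~\eqref{fl2} I would appeal to Rankin--Selberg theory. Since $|\lambda_\pi(p)|^{2}=\lambda_{\pi\times\tilde\pi}(p)$, and the Rankin--Selberg $L$-function $L(s,\pi\times\tilde\pi)$ has nonnegative Dirichlet coefficients together with a simple pole at $s=1$, the prime number theorem for $L(s,\pi\times\tilde\pi)$ gives $\sum_{p\leq N}|\lambda_\pi(p)|^{2}\log p\ll_{\pi}N$, which is~\eqref{lp2}; and the classical estimate $\sum_{n\leq N}|\lambda_\pi(n)|^{2}\ll_{\pi}N$ (the same bound entering~\cite{JLW} as~\eqref{bound-l2}) gives~\eqref{fl2}, hence also~\eqref{wl2}. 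For~\eqref{w-equi} with $W=b=1$, I would use a Siegel--Walfisz type theorem for $\lambda_\pi$: expanding a residue class into Dirichlet characters and applying, for each $\chi$ of conductor at most a fixed power of $\log N$, the prime number theorem for $L(s,\pi\otimes\chi)$ in its de la Vall\'ee Poussin zero-free region, with the possible exceptional real zero handled by Siegel's theorem, yields $\sum_{n\leq Y,\,n\equiv a\,(q)}\lambda_\pi(n)\ll_{\pi,A}Y/(\log Y)^{A}$ for every $A$, uniformly for $Y\leq N$ and $q$ at most a fixed power of $\log N$. Writing a long progression $P\subseteq[N]$ (which necessarily has such a common difference) as a difference of two initial segments of a residue class, and bounding a very short initial segment trivially via Cauchy--Schwarz together with~\eqref{fl2}, this gives $|\E_{n\in P}\lambda_\pi(n)|\ll(\log N)^{-1}$; the case $q=1$ in particular gives $\E_{[N]}\lambda_\pi\ll(\log N)^{-1}$. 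Hence~\eqref{w-equi} holds and $\lambda_\pi\in\mathcal M'$.

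It then remains to apply Theorem~\ref{main} with $f=\lambda_\pi$ and $W=b=1$, which bounds $\frac{1}{N}\sum_{n\in[N]}\bigbrac{\lambda_\pi(n+1)-\E_{\lambda_\pi}(N;1,1)}F(g(n)\Gamma)$ by $\ll_{m_G,d}(1+\norm{F}_{\lip})(\log N)^{-1}$. One removes the shift $n\mapsto n+1$ by reindexing --- this only replaces $g$ by the shifted polynomial sequence $n\mapsto g(n-1)$, which still lies in $\poly(\Z,G_\bullet)$ with the same filtration, and introduces $O(N^{\theta})$ boundary terms controlled by the Rankin--Selberg bound $|\lambda_\pi(n)|\ll_\pi n^{\theta}$ with $\theta<1$ --- and then adds back the term $\E_{\lambda_\pi}(N;1,1)\cdot\frac{1}{N}\sum_{n\in[N]}F(g(n)\Gamma)$, which is $\ll_{\pi}(1+\norm{F}_{\lip})(\log N)^{-1}$ because $\E_{\lambda_\pi}(N;1,1)=\E_{[N]}\lambda_\pi+O(N^{\theta-1})\ll(\log N)^{-1}$ and $\norm{F}_{\infty}\leq1$. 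Combining these contributions gives the stated bound. The one step genuinely requiring care is~\eqref{w-equi}: ruling out a spurious Siegel zero makes the dependence on $\pi$ of the implied constant ineffective, which is nonetheless permitted by the statement, and no ingredient beyond the standard analytic theory of $L(s,\pi\otimes\chi)$ and $L(s,\pi\times\tilde\pi)$ is required.
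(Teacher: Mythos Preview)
Your global strategy---verify $\lambda_\pi\in\mathcal M'$ with $W=b=1$ and then invoke Theorem~\ref{main}---is exactly the paper's, and your treatment of~\eqref{lp2} and~\eqref{fl2} via Rankin--Selberg theory, as well as the closing manipulations (removing the shift $n\mapsto n+1$ and restoring the mean $\E_{\lambda_\pi}$), are correct. The issue lies in your verification of the major-arc condition~\eqref{w-equi}.

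You propose to bound $\sum_{n\le Y,\,n\equiv a\,(q)}\lambda_\pi(n)$ via a ``prime number theorem for $L(s,\pi\otimes\chi)$'' using a de~la~Vall\'ee~Poussin zero-free region with the exceptional real zero handled by Siegel's theorem. There are two problems. First, a Siegel-type ineffective bound on real zeros of $L(s,\pi\otimes\chi)$ is \emph{not known} for general cuspidal $\pi$ on $GL_m$; obtaining one is precisely why Theorem~\ref{mulfunction} requires the extra hypothesis ``$\pi$ self-dual and $\pi\not\cong\pi\otimes\chi$'' (see the remark following it and the reference to~\cite{JLTW}). Second---and this is what renders the first point moot---no zero-free-region input is needed at all here: since $\pi$ is cuspidal on $GL_m$ with $m\ge2$, the twist $L(s,\pi\otimes\chi)$ is \emph{entire}, so a Perron contour shifted to $\Re(s)=\tfrac12$ picks up no residues, and the convexity bound~\eqref{convbound-l} alone yields the unconditional power saving $\sum_{n\le N}\lambda_\pi(n)\chi(n)\ll q^{m/(2m+4)+\varepsilon}N^{(m+1)/(m+2)+\varepsilon}$, which is the paper's route. (Zero-free regions and Siegel zeros enter when one integrates $-L'/L$, i.e.\ for prime sums, not for the coefficient sum you actually need.) A smaller gap: your phrase ``expanding a residue class into Dirichlet characters'' only treats the case $\gcd(a,q)=1$, whereas~\eqref{w-equi} must hold for \emph{all} long progressions in $[N]$; the paper handles $\gcd(a,q)>1$ separately by pulling out the gcd via multiplicativity and then performing a square-free/square-full decomposition of $n$.
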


\begin{remark}
To prove Theorem \ref{lfunction}, we need to verify $\lambda_\pi(n)$ satisfy the major-arc condition (\ref{w-equi}) with $W=1$ and the $L^2$-norm bounded conditions (\ref{lp2}) and (\ref{fl2}). Thus, compared with the proof of \cite{JLW}, we lessen the work to  verify $\lambda_\pi(n)$ satisfying the sieve condition (\ref{ph-l2}).
\end{remark}

We are willing to take some examples for the readers who are less familiar with nilsequences. Theorem 1.4 shows that the twisting of $\lambda_\pi(n)$ with a  polynomial phase function of degree $d\geq1$ has logarithmic decay, i.e.
\[
\sum_{n\leq N}\lambda_\pi(n)e(\alpha_d n^d+\dots+\alpha_1n+\alpha_0)\ll\frac{N}{\log N},
\]
where $\alpha_d,\dots,\alpha_1,\alpha_0\in\T$ are arbitrary frequencies; Theorem 1.4 also shows that the following expression  has logarithmic decay,
\[
\sum_{n\leq N}\lambda_\pi(n)e\bigbrac{\beta n\floor{n\alpha}}\psi(\{\alpha n\})\psi(\{\beta n\})\ll\frac{N}{\log N},
\]
where $\floor{\cdot}$ is the integer part operator, $\{\cdot\}$ is the fractional part operator, $\alpha,\beta\in\T$ are frequencies, and $\psi:[0,1]\to \C$ is a Lipschitz function that vanishes near 0 and 1.

It is also an interesting question to consider whether the sequences $\{\mu(n)\}_n$ and $\{\lambda_{\pi}(n)F(g(n)\Gamma)\}_n$ are orthogonal or not. This is motivated by the M\"obius pseudorandomness principle, which predicts that sums of the form $\sum_{n\leq x}\mu(n)\xi(n)$ should exhibit some cancellation if $\xi(n)$ is not obviously related to the prime factorisation of $n$. See also the M\"obius disjointness conjecture of Sarnak \cite{Sar}, which expects to use observables from zero entropy topological dynamical systems as the sequence $\xi(n)$. Our next conclusion shows that  the M\"obius pseudorandomness principle is true for the sequence $\{\lambda_{\pi}(n)F(g(n)\Gamma)\}$.

In this direction, the current recording results are from Fouvry and Ganguly \cite{FG} when $m=2$ and Jiang and L\"u \cite{JL19} when $m=3$ respectively. Combining their results, one has when $m=2,3$,
\[\sum_{n \leqslant N} \mu(n) \lambda_{\pi}(n) e(n \alpha) \ll N \exp (-c \sqrt{\log N}),\]
where $c>0$ is an absolute constant. In the same paper of Cafferata, Perelli and Zaccagnini \cite{CPZ}, they also have
\[\sum_{n \leqslant N} \mu(n) \lambda_{\pi}(n) e\left(n^{k} \alpha\right) \ll N \frac{\log \log N}{\sqrt{\log N}}.\]
 Jiang and L\"u \cite{JL} extend the above result to $GL_m$ automorphic cuspidal representation $\pi$ under Hypothesis H and Hypothesis S. Afterward, these two hypothesizes are removed in two later papers respectively--Jiang and L\"u \cite[Lemma 5.3]{JL21} removes Hypothesis H and Jiang, L\"u, Thorner and Wang \cite[Corollary 4.7]{JLTW} removes Hypothesis S.

Recently, Jiang, L\"u and Wang \cite{JLW21}\footnote{We would like to thank Yujiao Jiang for posting their paper \cite{JLW21} and explaining their papers.} prove that
\[\sum_{n \leqslant N} \mu(n) \lambda_{\pi}(n) e(n \alpha) \ll \frac{N}{\log N},\]
for $\pi$ is self-dual and $\pi \ncong \pi\otimes\chi$ for any quadratic primitive character $\chi$. In this direction, we can show that 
\begin{theorem}\label{mulfunction}
	Assume that $\pi$ is self-dual and $\pi \ncong \pi\otimes\chi$ for any quadratic primitive character $\chi$, with the notations in Theorem \ref{lfunction},  we have
	\[\sum_{n \leqslant N} \mu(n) \lambda_{\pi}(n) F(g(n)\Gamma) \ll_{m_G,d,\pi} \frac{N}{\log N}.\]
\end{theorem}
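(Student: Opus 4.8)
\emph{Strategy.} I would deduce Theorem~\ref{mulfunction} from Theorem~\ref{main} applied to the multiplicative function $f:=\mu\cdot\lambda_\pi$ (a pointwise product of two multiplicative functions, hence multiplicative), with the $W$-trick taken trivial, $W=1$. So the task reduces to checking that $f\in\mathcal M'$ and then disposing of the mean term. Two of the defining conditions come for free from the corresponding properties of $\lambda_\pi$: since $\bigabs{\mu(n)\lambda_\pi(n)}\le\bigabs{\lambda_\pi(n)}$, the $L^2$-bounds (\ref{lp2}), (\ref{fl2}), and (\ref{wl2}) (which for $W=1$ is literally (\ref{fl2}) up to a shift by $1$) follow at once from the Rankin--Selberg estimates $\sum_{p\le N}\bigabs{\lambda_\pi(p)}^2\log p\ll N$ and $\sum_{n\le X}\bigabs{\lambda_\pi(n)}^2\ll X$ recorded in \cite{JLW}; these hold for every cuspidal $\pi$ on $GL_m$ ($m\ge2$) and require neither self-duality nor the twist hypothesis.

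\emph{The new ingredient.} The only genuinely new condition is the equidistribution in progressions (\ref{w-equi}) for $\mu\lambda_\pi$ with $W=1$. Writing a progression $P\subseteq[N]$ as a difference of two truncations $\{n\le X:n\equiv r\ (\mathrm{mod}\ q)\}$ of a residue class, this reduces to the Siegel--Walfisz estimate
\[
\sum_{\substack{n\le X\\ n\equiv r\ (\mathrm{mod}\ q)}}\mu(n)\lambda_\pi(n)\ll_A\frac{X}{(\log X)^A}
\]
uniformly for $q\le(\log X)^C$, which with $A>C$ also controls the subtracted mean. To prove it I would compare the twisted Dirichlet series $\sum_n\mu(n)\lambda_\pi(n)\chi(n)n^{-s}=\prod_p\bigbrac{1-\lambda_\pi(p)\chi(p)p^{-s}}$ with $L(s,\pi\otimes\chi)^{-1}$: because the Euler factors agree to first order through $\sum_i\alpha_{\pi,i}(p)=\lambda_\pi(p)$, their quotient is an Euler product that converges absolutely and remains holomorphic, bounded, and non-vanishing in a half-plane $\real s>\tfrac12+\theta_m$, with $\theta_m<\tfrac12$ the current record towards the Ramanujan conjecture for $GL_m$. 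Hence $\sum_n\mu(n)\lambda_\pi(n)\chi(n)n^{-s}$ continues holomorphically to wherever $L(s,\pi\otimes\chi)$ is zero-free, and a standard Perron-formula contour shift into a zero-free region of the usual shape yields the estimate.

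\emph{The main obstacle.} The heart of the matter is to produce such a zero-free region uniformly over all Dirichlet characters $\chi$ of modulus $\le(\log X)^C$, and in particular to exclude an exceptional (Siegel) zero of $L(s,\pi\otimes\chi)$ when $\chi$ is quadratic. This is exactly where the hypotheses ``$\pi$ self-dual'' and ``$\pi\ncong\pi\otimes\chi$ for every quadratic primitive character $\chi$'' are used, and it is precisely the analytic input established by Jiang, L\"u and Wang~\cite{JLW21}, whose result (or the zero-free region underlying its proof) I would invoke directly. Without control of the exceptional zero one loses the power-of-logarithm saving uniformly in the moduli, and with it the $(\log N)^{-1}$ rate demanded by (\ref{w-equi}); this is the step I expect to be hardest.

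\emph{Conclusion.} Granting $f=\mu\lambda_\pi\in\mathcal M'$, Theorem~\ref{main} applied with $W=1$, $b=1$ to the shifted polynomial sequence $g(\cdot+1)$ (again a polynomial sequence adapted to the same filtration) gives, after reindexing and discarding $O(1)$ boundary terms,
\[
\frac1N\sum_{n\le N}\Bigbrac{\mu(n)\lambda_\pi(n)-\E_f(N;1,1)}F(g(n)\Gamma)\ll_{m_G,d}\bigbrac{1+\norm{F}_{\lip}}\frac1{\log N}.
\]
Since $\E_f(N;1,1)=\frac1N\sum_{n}f(n+1)\ll1/\log N$ (the $q=1$ case above, or \cite{JLW21} at frequency $0$) and $\bigabs{\sum_{n\le N}F(g(n)\Gamma)}\le N\norm{F}_\infty\le N$, the subtracted mean contributes $\ll N/\log N$ as well, and adding the two bounds gives $\sum_{n\le N}\mu(n)\lambda_\pi(n)F(g(n)\Gamma)\ll_{m_G,d,\pi}\bigbrac{1+\norm{F}_{\lip}}\,N/\log N$, which is Theorem~\ref{mulfunction} (with $\norm{F}_{\lip}$ absorbed into the implied constant for the fixed Lipschitz function $F$).
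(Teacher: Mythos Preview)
Your proposal is correct and follows essentially the same route as the paper: verify $\mu\lambda_\pi\in\mathcal M'$ with $W=1$ (the $L^2$ bounds inherited from $\lambda_\pi$, the progression condition from the Siegel--Walfisz type input of \cite{JLW21} under the self-duality and non-twist hypotheses), and then invoke Theorem~\ref{main}. The only point to add is that the paper treats the residue classes with $(r,q)>1$ explicitly---extracting the gcd and using that $\mu$ is supported on squarefree integers together with multiplicativity---whereas you state the Siegel--Walfisz bound for all residues without indicating this reduction; conversely, your handling of the shift $b=1$ via $g(\cdot+1)$ and of the subtracted mean $\E_f(N;1,1)$ is more explicit than the paper's.
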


\begin{remark}
The assumption $\pi \ncong \pi\otimes\chi$ is in \cite[Theorem 1.2]{JLTW}.  Roughly speaking, Jiang, L\"u, Thorner and Wang  can  obtain an analogy of the Siegel theorem in higher rank groups based on this assumption.
\end{remark}

\begin{remark}
Following the argument in \cite{FG} and \cite{JL19}, we don't need the additional assumption on $\pi$ for $m=2$ and $m=3$ respectively.
\end{remark}

\subsection*{Acknowledgements}
The authors are grateful to Yujiao Jiang, Lilian Matthiesen, Joni Ter\"av\"ainen and Zhiwei Wang for their helpful discussions. The authors also appreciate comments and suggestions from the referee which help us to improve this article. Part of this work was written when M.W. visits Shandong University, she thanks the warm and generous hospitality from Shandong University. X.H. is supported by the National Postdoctoral Innovative Talents Support Program (Grants No. BX20190227), NSFC (No. 12101427) and the Fundamental Research Funds for the Central Universities, SCU (No. 2021SCU12109). M.W. is partially supported by the G\"oran Gustafsson Foundation.

\section{Preliminaries and Outline of the Proof}

\subsection{Preliminaries}

In this subsection, we quickly collect all the facts and notations that we will need from Green-Tao's paper \cite{GT12a}.

Denote by $\mathfrak{g}_i$ the Lie algebra $G_i$, then $\mathfrak{g}_\bullet=\{\mathfrak{g}_i\}$ is a \emph{ filtration} of Lie algebras, i.e. $[\mathfrak{g},\mathfrak{g}_i]\subseteq\mathfrak{g}_{i+1}$, if and only if $G_\bullet=\{G_i\}$ is a filtration.

\begin{definition}\cite[Definition 2.1]{GT12a}\label{malcev}
	Let $G/\Gamma$ be  an $m$-dimensional nilmanifold and let $G_{\bullet}$ be a filtration. A basis $\mathcal{X} = \{X_1,\dots,X_{m}\}$ for the Lie algebra $\mathfrak{g}$ over $\R$ is called a \emph{Mal'cev basis} for $G/\Gamma$ adapted to $G_{\bullet}$ if the following four conditions are satisfied:
	\begin{enumerate}[(i)]
		\item $\{X_j,X_{j+1},\cdots,X_m\}$ spans an ideal of $\mathfrak{g}$ for all $0\leq j\leq m$;
		\item For each $1\leq i\leq d$ and $m_i=\dim G_i$, the Lie algebra $\mathfrak{g}_i$ of $G_i$ is the linear span of  $\{X_{m-m_i+1},X_{m-m_i+2},\cdots,X_m\}$;
		\item There is a diffeomorphism $\psi_\mathcal{X}:G\to\mathbb{R}^m$ determined by $$\psi_\mathcal{X}\Big(\exp(\omega_1X_1)\cdots\exp(\omega_mX_m)\Big)=(\omega_1,\cdots,\omega_m);$$
		\item In the coordinate system $\psi_\mathcal{X}$, $\Gamma=\psi_\mathcal{X}^{-1}(\mathbb{Z}^m)$.
	\end{enumerate}
\end{definition}

We say that a Mal'cev basis $\mathcal{X}$ for $G/\Gamma$ is \emph{$Q$-rational} if all of the structure constants $c_{ijk}$ in the relations
\[ [X_i, X_j] = \sum_k c_{ijk} X_k\] are rational with height at most $Q$. The \emph{height} of a real number $x$ is defined as $\max(|a|,|b|)$ if $x=a/b$ is rational in reduced form.

\begin{definition}\cite[Definition 1.2]{GT12a}\label{almost-equidistribution}
  Let $G/\Gamma$ be a nilmanifold.
	\begin{enumerate}
		\item
		Given a length $N > 0$ and an error tolerance $\delta > 0$, a finite sequence $(g(n)\Gamma)_{n \in [N]}$ is said to be \emph{$\delta$-equidistributed} if we have
		$$ \left|\E_{n \in [N]} F(g(n) \Gamma) - \int_{G/\Gamma} F\right| \leq \delta \|F\|_{\operatorname{Lip}}$$
		for all Lipschitz functions $F: G/\Gamma \to \C$.
		\item A finite sequence $(g(n)\Gamma)_{n \in [N]}$ is said to be \emph{totally $\delta$-equidistributed} if we have
		$$ \left|\E_{n \in P} F(g(n) \Gamma) - \int_{G/\Gamma} F\right| \leq \delta \|F\|_{\operatorname{Lip}}$$
		for all Lipschitz functions $F: G/\Gamma \to \C$ and all arithmetic progressions $P \subset [N]$ of length at least $\delta N$.
	\end{enumerate}
\end{definition}

The symbol $\int_{G/\Gamma}$ will stand for integration with respect to the unique left invariant probability measure on $G/\Gamma$.

\begin{definition}\cite[Definition 1.17]{GT12a}\label{rat-def-quant}
	Let $G/\Gamma$ be a nilmanifold and let $Q > 0$ be a parameter. We say that $\gamma \in G$ is \emph{$Q$-rational} if $\gamma^r \in \Gamma$ for some integer $r$, $0 < r \leq Q$. A \emph{$Q$-rational point} is any point in $G/\Gamma$ of the form $\gamma\Gamma$ for some $Q$-rational group element $\gamma$. A sequence $(\gamma(n))_{n \in \Z}$ is \emph{$Q$-rational} if every element $\gamma(n)\Gamma$ in the sequence is a $Q$-rational point.
\end{definition}

\begin{definition}\cite[Definition 1.18]{GT12a}\label{smooth-seq-def}  Let $G/\Gamma$ be a nilmanifold with a Mal'cev basis $\mathcal{X}$.  Let $(\varepsilon(n))_{n \in \Z}$ be a sequence in $G$, and let $M, N \geq 1$.  We say that $(\varepsilon(n))_{n \in \Z}$ is \emph{$(M,N)$-smooth} if we have $d(\varepsilon(n),\operatorname{id}_G) \leq M$ and $d(\varepsilon(n),\varepsilon(n-1)) \leq M/N$ for all $n \in [N]$.
\end{definition}

With the above notations, we give the following factorization theorem. See \cite[Theorem 1.19]{GT12a}

\begin{proposition}[Green--Tao factorization theorem]\label{factorization}
	Let $m,d \geq 0$, and let $M_0, N \geq 1$ and $A > 0$ be real numbers.
	Suppose that $G/\Gamma$ is an $m$-dimensional nilmanifold together with a filtration $G_{\bullet}$ of degree $d$. Suppose that $\mathcal{X}$ is an $M_0$-rational Mal'cev basis $\mathcal X$ adapted to $G_{\bullet}$ and that $g \in \poly(\Z,G_{\bullet})$. Then there is an integer $M$ with $M_0 \leq M \ll M_0^{O_{A,m,d}(1)}$, a rational subgroup $G' \subseteq G$, a Mal'cev basis $\mathcal{X}'$ for $G'/\Gamma'$ in which each element is an $M$-rational combination of the elements of $\mathcal{X}$, and a decomposition $g = \varepsilon g' \gamma$ into polynomial sequences $\varepsilon, g', \gamma \in \poly(\Z,G_{\bullet})$ with the following properties:
	\begin{enumerate}
		\item $\varepsilon : \Z \rightarrow G$ is  $(M,N)$-smooth;
		\item $g' : \Z \rightarrow G'$ takes values in $G'$, and the finite sequence $(g'(n)\Gamma')_{n \in [N]}$ is totally $1/M^A$-equidistributed in $G'/\Gamma'$, using the metric $d_{\mathcal{X}'}$ on $G'/\Gamma'$;
		\item $\gamma: \Z \rightarrow G$ is $M$-rational, and $(\gamma(n)\Gamma)_{n \in \Z}$ is periodic with period at most $M$.
	\end{enumerate}
\end{proposition}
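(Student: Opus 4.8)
This proposition is exactly \cite[Theorem 1.19]{GT12a}, so in the paper it is simply quoted; let me nevertheless sketch the shape of the argument, since it underlies everything that follows. The single deep input is Green and Tao's quantitative equidistribution criterion \cite[Theorem 2.9]{GT12a}: if $(g(n)\Gamma)_{n\in[N]}$ fails to be $\delta$-equidistributed, then there is a nontrivial horizontal character $\eta\colon G\to\R/\Z$ of modulus $|\eta|\ll\delta^{-O_{m,d}(1)}$ such that the smoothness norm $\|\eta\circ g\|_{C^\infty[N]}\ll\delta^{-O_{m,d}(1)}$; informally, the projection of the orbit to the horizontal torus is trapped near a low-complexity subtorus. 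I would treat this as a black box. The plan is then an induction on $\dim G=m$.

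For the inductive step: if $(g(n)\Gamma)_{n\in[N]}$ is already totally $1/M_0^A$-equidistributed we are done, taking $G'=G$ and $\varepsilon=\gamma=\mathrm{id}_G$. Otherwise there is an arithmetic progression $P=\{an+r\}\subseteq[N]$ of length $\geq N/M_0^A$ witnessing the failure, and applying the criterion to the reparametrisation $n\mapsto g(an+r)$ produces a resonant horizontal character $\eta$ of modulus $\ll M_0^{O_{A,m,d}(1)}$ with $\eta\circ g$ smooth on $[N]$. One uses $\eta$ to peel off a piece: it cuts out a connected rational subgroup $G''\leq G$ of dimension at most $m-1$, and the smoothness of $\eta\circ g$ together with the rational structure lets one write $g=\varepsilon_1\,\tilde g\,\gamma_1$ with $\varepsilon_1$ being $(M_1,N)$-smooth, $\gamma_1$ being $M_1$-rational and periodic with period $\ll M_1$, and $\tilde g$ taking values in $G''$, where $M_1\ll M_0^{O_{A,m,d}(1)}$ and $G''$ carries an $M_1$-rational Mal'cev basis each of whose elements is an $M_1$-rational combination of $\mathcal X$. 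Apply the induction hypothesis to $\tilde g$ on $G''/\Gamma''$ (with $A$ enlarged by a bounded amount) to obtain $\tilde g=\varepsilon_2\,g'\,\gamma_2$, and reassemble $g=(\varepsilon_1\varepsilon_2)\,g'\,(\gamma_2\gamma_1)$ after commuting the rational and smooth pieces into position; products of smooth (resp. $Q$-rational, resp. periodic) sequences are smooth (resp. rational, resp. periodic) with parameters that multiply, so the output stays $\ll M_0^{O_{A,m,d}(1)}$. The dimension strictly decreases at each step, so the recursion terminates after at most $m$ rounds with the asserted bound $M\ll M_0^{O_{A,m,d}(1)}$.

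The genuinely hard part is the equidistribution criterion, whose proof — which I would not reproduce — runs by van der Corput / multiple differencing to reduce the degree-$d$, step-$s$ case to the abelian Weyl-sum case, handled by classical exponential-sum estimates, while tracking how the horizontal character produced at each stage lifts back, all glued together with a quantitative form of Leon Green's theorem on equidistribution in nilmanifolds. The remaining, more elementary obstacle is pure bookkeeping: one must check that every auxiliary Mal'cev basis, subgroup, and rational or periodic sequence that appears has height or period bounded by a fixed power of $M_0$, which is lengthy but routine linear algebra over $\Q$ of the kind carried out in the supporting material of \cite{GT12a}.
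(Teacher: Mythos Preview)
Your proposal is correct: the paper does not prove this proposition but simply cites \cite[Theorem 1.19]{GT12a}, exactly as you note in your first sentence. The sketch you give of the underlying Green--Tao argument (induction on $\dim G$, invoking the quantitative Leibman theorem to find a resonant horizontal character when equidistribution fails, passing to a lower-dimensional rational subgroup, and reassembling the smooth/equidistributed/rational pieces) is an accurate outline of how the cited result is proved in \cite{GT12a}, so there is nothing to add.
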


The next result is the key theorem to verify whether a sequence is equidistributed or not, but before it,   we introduce the so-called smoothness norm.

\begin{definition}\cite[Definition 2.7]{GT12a}\label{smooth-norm}
Suppose that $g:\Z\to\T$ is a polynomial sequence of degree $d$. Then $g$ may  be written uniquely as
\[
g(n)=\alpha_0+\alpha_1\binom n1+\dots+\alpha_d\binom nd.
\]
For any $N>0$, we define the \emph{smoothness norm} as
\[
\norm{g}_{C^\infty[N]}:=\sup_{1\leq j\leq d}N^j\norm{\alpha_j}.
\]	
\end{definition}

\begin{proposition}[Quantitative Leibman theorem]\label{leibman}
Let $m_G, d \geq 0$, $0 < \delta < 1/2$, and $N \geq 1$.
Let $G/\Gamma$ be an $m_G$-dimensional nilmanifold together with a filtration $G_{\bullet}$ of degree $d$ and a $\frac{1}{\delta}$-rational Mal'cev basis $\mathcal{X}$ adapted to this filtration. Suppose that $g \in \poly(\Z,G_{\bullet})$. If $(g(n)\Gamma)_{n \in [N]}$ is not $\delta$-equidistributed in $G/\Gamma$, then there is a horizontal character $\eta$ with $0<|\eta|\ll\delta^{-O_{m_G,d}(1)}$ such that
\[
\norm{\eta\circ g}_{C^\infty[N]}\ll\delta^{-O_{m_G,d}(1)}.
\]
\	
\end{proposition}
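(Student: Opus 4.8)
The plan is to reproduce, at least in outline, the inductive argument of Green and Tao \cite{GT12a}; the induction is on the dimension $m_G$, the case $m_G=0$ being vacuous. If $(g(n)\Gamma)_{n\in[N]}$ is not $\delta$-equidistributed, fix a $1$-bounded Lipschitz function $F$ with $\int_{G/\Gamma}F=0$ and $\bigabs{\E_{n\in[N]}F(g(n)\Gamma)}>\delta$. The first reduction is to a single vertical frequency: writing $G_d$ for the last nontrivial group of the filtration, which is central since $[G,G_d]\subseteq G_{d+1}=\set{\mathrm{id}_G}$, we expand $F$ into Fourier modes for the action of the vertical torus $G_d/\Gamma_d$, keep only the modes of height $\ll\delta^{-O(1)}$ at an acceptable cost in the Lipschitz norm, and are reduced to the case that $F$ transforms under a single vertical character $\xi$ with $\bigabs{\xi}\ll\delta^{-O(1)}$ and $\bigabs{\E_{n\in[N]}F(g(n)\Gamma)}\gg\delta^{O_{m_G,d}(1)}$.

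When $G$ is abelian, i.e.\ $G/\Gamma=\T^{m_G}$, one argues directly with no inductive step. Fourier expansion on the torus together with the truncation above reduces matters to the following scalar Weyl inequality: if $n\mapsto e(k\cdot g(n))$ fails to be $\delta^{O(1)}$-equidistributed for some nonzero $k\in\Z^{m_G}$ with $\bigabs k\ll\delta^{-O(1)}$, then the polynomial $p(n)=k\cdot g(n)=\alpha_d\binom nd+\dots+\alpha_1 n$ admits a denominator $q\ll\delta^{-O(1)}$ with $\norm{q\alpha_j}\ll\delta^{-O(1)}N^{-j}$ for $1\le j\le d$; one then takes $\eta=qk$. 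I would prove this inequality by iterating van der Corput's inequality to peel off the degree, bottoming out at the linear case via Dirichlet's theorem, each differencing step costing a bounded power of $\delta$.

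For the inductive step, with $G$ nonabelian, one splits on $\xi$. If $\xi=0$ then $F$ is $G_d$-invariant, hence descends to a $1$-bounded Lipschitz function on the strictly lower-dimensional nilmanifold $(G/G_d)/(\Gamma/\Gamma_d)$; the image of $g$ there is not $\delta^{O(1)}$-equidistributed, so the induction hypothesis produces a horizontal character $\bar\eta$ with $0<\bigabs{\bar\eta}\ll\delta^{-O(1)}$ and the required smoothness bound, and $\bar\eta$ pulls back to a horizontal character $\eta$ on $G$ with the same properties (horizontal characters annihilate $G_2\supseteq G_d$ automatically). If $\xi\neq0$, van der Corput's inequality gives $\delta^{O(1)}\ll\bigabs{\E_h\E_n F(g(n+h)\Gamma)\overline{F(g(n)\Gamma)}}$ up to admissible errors, and one follows \cite{GT12a}: the function $F\otimes\bar F$, transported to a suitable auxiliary nilmanifold built from $G$ by a ``square'' construction involving the subgroup $\set{(x,y)\in G\times G:xy^{-1}\in G_2}$, has trivial vertical frequency there; so for $\gg\delta^{O(1)}N$ values of $h$ the induction applies, producing (after a pigeonhole in $h$) a single horizontal character of the auxiliary group, and unwinding the construction shows $\partial_h(\eta\circ g)$ is $\delta^{-O(1)}$-smooth for a fixed horizontal $\eta$ on $G$ and many $h$, which one final pigeonhole upgrades to $\norm{\eta\circ g}_{C^\infty[N]}\ll\delta^{-O(1)}$.

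The hardest part, which is the genuine technical content, is the quantitative bookkeeping: one must check that the auxiliary nilmanifolds $(G/G_d)/(\Gamma/\Gamma_d)$ and the square construction carry $\delta^{-O(1)}$-rational Mal'cev bases so that the induction hypothesis really applies, and one must control the losses in the vertical Fourier truncation, the van der Corput steps, and the recombination over $h$ so that every implied exponent stays $O_{m_G,d}(1)$. For these I would import the lemmas of \cite{GT12a} on Mal'cev coordinates of rational subgroups and quotients rather than reprove them.
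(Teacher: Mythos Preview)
Your outline is correct and faithfully reproduces the Green--Tao argument from \cite{GT12a}: induction on $m_G$, reduction to a single vertical frequency, the abelian base case via a quantitative Weyl inequality, and the van der Corput / square-construction step for nonzero vertical frequency. The paper, however, does not prove this proposition at all---its entire proof is the single sentence ``See \cite[Theorem 2.9]{GT12a}'', so you have supplied considerably more than what the paper itself contains.
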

\begin{proof}
See	\cite[Theorem 2.9]{GT12a}.
\end{proof}

\subsection{Motivation and outline of the argument}

For convenience, in the rest of this paper we would allow all of the asymptotic notions, such as $O(\cdot),o(\cdot),\gg,\ll$ and so on, to depend on the degree $d$ and the dimension $m_G$.

We actually dawn on the ideas from the three papers \cite{GT12b}, \cite{Mat} and \cite{MV} to prove Theorem \ref{main}. As we said in introduction, Montgomery and Vaughan \cite{MV} deal with the correlation of a class of multiplicative functions with linear phase functions. For doing this, they first reduce the exponential sum $\E_{n\in[N]}f(n)e(n\alpha)$ with $\alpha\neq0$ to the one in which variables are in terms of primes $p$, that is $\E_{p\sim U}\E_{n\leq N/p}\log p\,f(p)f(n)e(pn\alpha)$. The using of Cauchy-Schwarz inequality shows that it is bounded by $\norm{f}_{L^2[\frac{N}{U}]}$ times a half power of
\[
\E_{p,p'\sim U}\log p\log p'\,f(p)f(p')\E_{n\leq\min\set{\frac{N}{p},\frac{N}{p'}}}e\bigbrac{(p-p')n\alpha}.
\]
Because the number of pairs $(p,p')\in(U,2U]^2$ such that $p=p'$ is only $U$, one can show that in this case the above expression is pretty small so is negligible; whilst, Montgomery and Vaughan also show that the sequence $\set{n(p-p')\alpha\mrd{1}}_n$ is equidistributed for almost all pairs $(p,p')$ whenever $\alpha\neq 0$, and this  leads them to a desired bound. It seems that their approach can be generalized to deal with the correlation of $f$ with  polynomial phase functions if $f$ satisfies conditions (\ref{bound-prime}) and (\ref{bound-l2}). In what follows, we provide a heuristic argument for the generalization. Suppose that $k\geq 2$ is a natural number and $0<\delta<1$ is a parameter,  we aim to show that the following inequality holds only if $\alpha$ is $\delta^{-O(1)}$-rational ($\norm{q\alpha}\ll\delta^{-O(1)}/N^k$ for some $1\leq q\ll\delta^{-O(1)}$)
\begin{align*}
\delta\leq\bigabs{\E_{p\sim U}\E_{pn\sim N}\log pf(p)f(n)e(p^kn^k\alpha)}.
\end{align*}
Firstly, it can be seen from H\"older's inequality with some large integer $s=s(k)$ that
\[
\delta\ll\Bigbrac{\E_{n\sim N/U}|f(n)|^{\frac{2s}{2s-1}}}^{\frac{2s-1}{2s}}\Bigbrac{\E_{n}\bigabs{\E_{p\sim U}\log p\,f(p)e(p^kn^k\alpha)}^{2s}}^{\frac{1}{2s}}.
\]
The assumptions $f(p)$ is bounded for every prime $p$ and $\E_{n}|f(n)|^2\ll1$ (or a weaker assumption $\E_{n}|f(n)|^{\frac{2s}{2s-1}}\ll1$)  then lead us to
\[
\delta^{2s}\ll\E_{p_1,\cdots,p_{2s}\sim U}\log p_1\,\cdots\log p_{2s}\,\bigabs{\E_{n\sim N/U}e\bigbrac{(p_1^k+\cdots +p_s^k-\cdots-p_{2s}^k)n^k\alpha}}.
\]
It  follows from \cite[Lemma 1.1.16]{Tao} that there is $q\ll\delta^{-O(1)}$ and a set $\mathcal P\subset(U,2U]^{2s}$ with $|\mathcal P|\gg\delta^{O(1)}U^{2s}$ such that for each  $\vec p=(p_1,\cdots,p_{2s})\in\mathcal P$
\[
\norm{q(p_1^k+\cdots +p_s^k-\cdots-p_{2s}^k)\alpha}\ll\delta^{-O(1)}\bigbrac{\frac{U}{N}}^k.
\]
If we can show that $\set{p_1^k+\cdots +p_s^k-\cdots-p_{2s}^k}_{\vec p\in\mathcal P}$ forms a dense  (up to a factor $\delta^{O(1)}$ ) subset of the interval $[1,O(U^k)]$, the desired conclusion ($\alpha$ is $\delta^{-O(1)}$-rational) would follow from \cite[Lemma 3.4]{GT12b}. Practically,, the fact that $\set{p_1^k+\cdots +p_s^k-\cdots-p_{2s}^k}_{\vec p\in\mathcal P}$ is dense is a consequence of \cite[Lemma 3.3]{GT12b} if $s$ is large compared with the degree $k$. In this paper, however, we are more interested in the correlation of multiplicative functions under weaker conditions (weaker than (\ref{bound-prime}) and (\ref{bound-l2})).

In our understanding, the main result of Jiang, L\"u and Wang \cite{JLW} is that they verify the function $\lambda_\pi$ satisfies conditions (\ref{bound-l2}--\ref{ph-l2}), and then they apply the thought of Montgomery and Vaughan to show that $\lambda_\pi$ doesn't correlate with linear phase functions. Fortunately, from their proof, we find that the function $\lambda_\pi$ has good major-arc behavior, and this motivates us to study the (dis-)correlation of nilsequences with a class of functions that have good major-arc behavior (satisfy condition (\ref{w-equi})).

The first step is to apply Green--Tao factorization theorem, Theorem \ref{factorization}, to decompose the polynomial $g$ as a product $\epsilon g'\gamma$, where $\epsilon$ is a smooth factor, $\gamma$ is rational, and $g'$ is highly equidistributed in a closed subgroup $G'\subseteq G$. We then eliminate the rather harmless factors $\epsilon$ and $\gamma$ just as Green and Tao did in their \cite[Section 2]{GT12b}. This  leaves us to deal with the highly equidistributed sequences with respect to arithmetic progressions with small moduli, say,
\[
\E_P\E_{n\in P}f(n)F_P\bigbrac{g_P(n)\Gamma_P},
\]
where $P$ denotes those arithmetic progressions, and $g_P$ is highly equidistributed for each $P$. The next step is to utilize the approach of Montgomery and Vaughan, to split the function $f(n)$ as a product $f(p)f(n)$, where $p$ is a prime number, so that we are in the position that
\[
\E_P\E_{pn\in P}\log p\,f(p)f(n)F_P\bigbrac{g_P(pn)\Gamma_P}.
\]
Here, we simplify the situation we would actually face, such as $pn$ is in a reduced residue class $pn\equiv b\mrd{W}$. Unlike in the work of Matthiesen \cite{Mat}, our functions $f$ don't have enough information in each progression $P$ which fails us to deal with the summation over $pn$ in progressions $P$ piece by piece. We compromise with taking the above outside sum (counting progression) all the time, and view $F_P$ as a piecewise function which supports on $G/\Gamma$ and has very nice local properties in the sense that  $F_P\bigbrac{g_P(\cdot)\Gamma_P}$ is equidistributed in each progression $P$.

The application of Cauchy-Schwarz inequality allows us to transfer the matter to understanding the equidistribution of product of nilsequences, and the correlation of von Mangoldt function with product of nilsequences,
\[
\E_nF_P(g_P(pn)\Gamma_P)\bar{F_P(g_P(p'n)\Gamma_P)}
\]
and
\[
\E_p\Lambda(p)F_P(g_P(pn)\Gamma_P)\bar{F_P(g_P(pn')\Gamma_P)}.
\]
We then take care of the two expressions by showing that almost all product sequences $\bigbrac{g_P(p\cdot),g_P(p'\cdot)}$ and $\bigbrac{g_P(n\cdot),g_P(n'\cdot)}$ are equidistributed whenever $g_P$ is highly equidistributed, making use of the quantitative Leibman theorem (Proposition \ref{leibman}); and also by using bilinear form method adapted to polynomial nilsequences.

\section{Reducing to Equidistributed Cases}

As described in Section 2, we are going to factorize the polynomial sequence $g$ into a smooth factor $\epsilon$, a rational factor $\gamma$ and a totally equidistributed factor $g'$, then eliminate the rather harmless factors $\epsilon$ and $\gamma$ in the light of  condition (\ref{w-equi}). This will allow us to focus on the highly equidistributed polynomial sequences. The overall strategy of the reduction follows from \cite[Section 2]{GT12b}.

Without loss of generality, we may normalise the Lipschitz function $F$ so that $\norm{F}_\lip=1$, assume that $A>1$ is a large number to be specified, and the parameter $M_0$ in Theorem \ref{main} is equal to the maximal value, that is $M_0\geq\log N$.

 We begin our calculation with applying Proposition \ref{factorization} to the polynomial sequence $g$ to find an integer $M$ with $M_0\leq M\leq M_0^{O_{A}(1)}$; a rational subgroup $G'\subseteq G$; a Mal'cev basis $\mathcal X'$ adapted to $(G')_\bullet$ for which elements are $M$-rational combinations of elements of $\mathcal X$; and a decomposition $g=\epsilon g'\gamma$, where $\bigbrac{\epsilon(n)}_{n\in\Z}$ is $(M,N)$-smooth, $\bigbrac{g'(n)\Gamma'}_{n\in[N]}$ is totally $M^{-A}$-equidistributed in $G'/\Gamma'$, and $\bigbrac{\gamma(n)}_{n\in\Z}$ is periodic with period $q\leq M$.   Immediately, on writing $g$ as the product $\epsilon g'\gamma$, one has,
 \begin{multline*}
\frac{\phi(W)}{WN}\sum_{n\in[N]}\Bigbrac{f(Wn+b)-\E_f(N;W,b)}F(g(n)\Gamma)\\
=\frac{\phi(W)}{WN}\sum_{n\in[N]}\Bigbrac{f(Wn+b)-\E_f(N;W,b)}F\bigbrac{\epsilon(n)g'(n)\gamma(n)\Gamma}.	
\end{multline*}

Since the sequence $\bigbrac{\gamma(n)}_{n\in\Z}$ is periodic with period $q\leq M$, we can partition the discrete interval $[N]$ into arithmetic progressions with common difference $q$ so that the fractional part of $\gamma$ with respect to $\Gamma$ takes constant values in each progression. In practice,  let $P_i$ be the largest progression of the form $i+q\cdot\set{0,1,2,\dots}$ inside of $[N]$ and of common difference $q$ and starting point $i=\set{0,1,\dots,q-1}$ respectively. Then for every integer $0\leq i<q$, there is a number $\gamma_i$ such that $\gamma_i\Gamma=\gamma(P_i)\Gamma$; besides, from \cite[Section 2]{GT12b}, all the coordinates $\psi_\mathcal X(\gamma_i)$ are rationals with height at most $O(M^{O(1)})$. Therefore, we see quickly  that,
\begin{multline*}
\frac{\phi(W)}{WN}\sum_{n\in[N]}\Bigbrac{f(Wn+b)-\E_f(N;W,b)}F\bigbrac{\epsilon(n)g'(n)\gamma(n)\Gamma}
\\=\frac{\phi(W)}{WN}\sum_{0\leq i<q}\sum_{n\in P_i}\Bigbrac{f(Wn+b)-\E_f(N;W,b)}F\bigbrac{\epsilon(n)g'(n)\gamma_i\Gamma}.	
\end{multline*}

In the next step, we'd like to split $P_i$ further into sub-progressions such that $\epsilon$ is approximately constant in each of these sub-progressions. For this purpose, we fix a number $n_0\in[N]$ for the moment and consider the difference
\[
\Bigabs{F\bigbrac{\epsilon(n_0)g'(n)\gamma_i\Gamma}-F\bigbrac{\epsilon(n)g'(n)\gamma_i\Gamma}}.
\]
Taking note that the function $F: G/\Gamma\to\C$ has Lipschitz norm 1, one can deduce from the definition of Lipschitz norm that
\[
\Bigabs{F\bigbrac{\epsilon(n_0)g'(n)\gamma_i\Gamma}-F\bigbrac{\epsilon(n)g'(n)\gamma_i\Gamma}}\leq\rd_\mathcal X\bigbrac{\epsilon(n_0)g'(n)\gamma_i,\epsilon(n)g'(n)\gamma_i}.
\]
From the perspective of the operator $\rd_\mathcal X$ is right-invariant, one has
\[
\rd_\mathcal X\bigbrac{\epsilon(n_0)g'(n)\gamma_i,\epsilon(n)g'(n)\gamma_i}=\rd_\mathcal X\bigbrac{\epsilon(n_0),\epsilon(n)}.
\]
The assumption $\epsilon$ is $(M,N)$-smooth then yields that
\[
\rd_\mathcal X\bigbrac{\epsilon(n_0),\epsilon(n)}\leq\abs{n-n_0}\sup_{m\in[n_0,n]}\rd_\mathcal X\bigbrac{\epsilon(m-1),\epsilon(m)}\leq\frac{M|n-n_0|}{N}.
\]
Thus, we can conclude that
\begin{align}\label{diff}
\Bigabs{F\bigbrac{\epsilon(n_0)g'(n)\gamma_i\Gamma}-F\bigbrac{\epsilon(n)g'(n)\gamma_i\Gamma}}\ll\frac{1}{\log N},
\end{align}
whenever $|n-n_0|\leq\frac{N}{M\log N}$. Taking the advantage of above analysis, one can decompose $P_i$ into sub-progressions $P_{i,j}$ in which each $P_{i,j}$ has diameter length at most $O(\frac{N}{M\log N})$, and as a consequence,  there are at most $O(M\log N)$ disjoint progressions. By fixing an element $\epsilon_{i,j}\in\bigset{\epsilon(P_{i,j})}$ for each progression $P_{i,j}$, plainly, we have
\begin{multline*}
\frac{\phi(W)}{WN}\sum_{0\leq i<q}\sum_{n\in P_i}\Bigbrac{f(Wn+b)-\E_f(N;W,b)}F\bigbrac{\epsilon(n)g'(n)\gamma_i\Gamma}\\
=\frac{\phi(W)}{WN}\sum_{i,j}\sum_{n\in P_{i,j}}\Bigbrac{f(Wn+b)-\E_f(N;W,b)}F\bigbrac{\epsilon_{i,j}g'(n)\gamma_i\Gamma}	\\
+\frac{\phi(W)}{WN}\sum_{i,j}\sum_{n\in P_{i,j}}\Bigbrac{f(Wn+b)-\E_f(N;W,b)}\Bigset{F\bigbrac{\epsilon(n)g'(n)\gamma_i\Gamma}-F\bigbrac{\epsilon_{i,j}g'(n)\gamma_i\Gamma}}.
\end{multline*}
Since it follows from Cauchy-Schwarz inequality and condition (\ref{wl2}) that
\begin{align}\label{wl1}
\frac{\phi(W)}{WN}\sum_{n\leq N}|f(Wn+b')|\ll N^{-1}\Bigbrac{\frac{\phi(W)}{W}}^{1/2}\Bigbrac{\frac{\phi(W)}{WN}\sum_{n\in[N]}|f(Wn+b)|^2}^{1/2}\ll 1	
\end{align}
holds for every integer $1\leq b'\leq W$ coprime with $W$.
We are able to bound the above second term as follows due to  inequality (\ref{diff}) and (\ref{wl1})
\begin{multline*}
\frac{\phi(W)}{WN}\sum_{i,j}	\sup_{n\in P_{i,j}}\Bigabs{F\bigbrac{\epsilon_{i,j}g'(n)\gamma_i\Gamma}-F\bigbrac{\epsilon(n)g'(n)\gamma_i\Gamma}}\sum_{n\in P_{i,j}}\Bigbrac{|f(Wn+b)|+|\E_f(N;W,b)|}\\
\ll\frac{1}{\log N}\frac{\phi(W)}{WN}\sum_{n\in[N]}|f(Wn+b)|
\ll \frac{1}{\log N},\qquad\qquad\qquad
\end{multline*}
thus above term can be absorbed by the error term of Theorem \ref{main}.
Besides, it also follows from the assumption $\epsilon$ is $(M,N)$-smooth that $\rd_\mathcal X(\epsilon_{i,j},\mathrm{id}_G)\leq M$ and thus, by \cite[Lemma A.4]{GT12a}, $\psi_\mathcal X(\epsilon_{i,j})\ll M^{O(1)}$.

It leaves us to concentrate on settling the above first term. When the totally equidistributed part $\bigbrac{g'(n)\Gamma'}_{n\in[N]}$ is trivial, i.e. $g'(n)=\text{id}_G\mod\Gamma$, it becomes
\[
\frac{\phi(W)}{WN}\sum_{0\leq i<q}\sum_{n\in P_i}c_{i,j}\bigbrac{f(Wn+b)-\E_f(N;W,b)}
\]
for some $c_{i,j}\in\C$ with $|c_{i,j}|\leq1$, in view of $F:G/\Gamma\to\C$ is a 1-bounded function. It then can be easily verified from  condition (\ref{w-equi})  that this term is bounded by $O\bigbrac{\frac{1}{\log N}}$, and the theorem follows. Hence, in the rest part of this paper we would always assume that $\bigbrac{g'(n)\Gamma'}_{n\in[N]}$ isn't trivial.

For a fixed pair $(i,j)$, write $(H_i)_\bullet=\gamma_i^{-1}G'_\bullet\gamma_i$, and let $g_i\in\poly(\Z,(H_i)_\bullet)$ be the polynomial sequence defined via $g_i(n)=\gamma_i^{-1}g'(n)\gamma_i$. Besides, suppose that $\Lambda_i=\Gamma\cap H_i$, and define the 1-bounded Lipschitz function $F_{i,j}:H_i/\Lambda_i\to\C$ in the manner of $F_{i,j}(x\Lambda_i)=F(\epsilon_{i,j}\gamma_ix\Gamma)$. Thus, for each pair $(i,j)$,
\begin{multline*}
\sum_{n\in P_{i,j}}\Bigbrac{f(Wn+b)-\E_f(N;W,b)}F\bigbrac{\epsilon_{i,j}g'(n)\gamma_i\Gamma}\\
=\sum_{n\in P_{i,j}}\Bigbrac{f(Wn+b)-\E_f(N;W,b)}F_{i,j}\bigbrac{g_i(n)\Lambda_i}.	
\end{multline*}
Now, assume that $\mu_{i,j}=\int_{H_i/\Lambda_i}F_{i,j}$ as the mean value of $F_{i,j}$ on the nilmanifold $H_i/\Lambda_i$ and then rewrite $F_{i,j}$ as $\brac{F_{i,j}-\mu_{i,j}}+\mu_{i,j}$. Clearly, $F_{i,j}-\mu_{i,j}$ is a bounded Lipschitz function and $\int_{H_i/\Lambda_i}(F_{i,j}-\mu_{i,j})=0$. Moreover, taking note that $P_{i,j}$ are progressions of size $|P_{i,j}|\gg\frac{N}{qM\log N}\gg\frac{N}{qM^2}$, it can be deduced from  condition (\ref{w-equi}), as well as $|\mu_{i,j}|\ll1$ for all pairs $(i,j)$, that
\begin{multline*}
\biggabs{\frac{\phi(W)}{WN}\sum_{i,j}\sum_{n\in P_{i,j}}\mu_{i,j}\Bigbrac{f(Wn+b)-\E_f(N;W,b)}}	\\\ll\frac{\phi(W)}{WN}\sum_{i,j}\Bigabs{\sum_{n\in P_{i,j}}f(Wn+b)-\E_f(N;W,b)}\ll\frac{1}{\log N}.
\end{multline*}
Thus, without loss of generality, we may assume that $F_{i,j}:H_i/\Lambda_i\to\C$ is a bounded Lipschitz function with $\int_{H_i/\Lambda_i}F_{i,j}=0$. Just sum up what we've agreed so far, we are in the position that
\begin{align}\label{finalre}
&\frac{\phi(W)}{WN}\sum_{n\in[N]}\Bigbrac{f(Wn+b)-\E_f(N;W,b)}F(g(n)\Gamma)\nonumber\\
=&\frac{\phi(W)}{WN}\sum_{i,j}\sum_{n\in P_{i,j}}\Bigbrac{f(Wn+b)-\E_f(N;W,b)}F_{i,j}\bigbrac{g_i(n)\Lambda_i}+O(\frac{1}{\log N}).
\end{align}
With the help of \cite[Claim in Section 2]{GT12b}, together with $\psi_\mathcal X(\gamma_i),\psi_\mathcal X(\epsilon_{i,j})\ll M^{O(1)}$, one may find a Mal'cev basis $\mathcal Y_i$ for $H_i/\Lambda_i$ adapted to $(H_i)_\bullet$ for which each $\mathcal Y_i$ is an $M^{O(1)}$-rational combination of elements of $\mathcal X$; besides, we also have $\norm{F_{i,j}}_\lip\ll M^{O(1)}$ and $g_i\in\poly(\Z,(H_i)_\bullet)$ with $\bigbrac{g_i(n)}_{n\in[N]}$ is totally $M^{-cA+O(1)}$-equidistributed for some constant $c>0$.
Besides, it can be seen from  (\ref{wl1}) that
\[
\E_f(N;W,b)=\frac{\phi(W)}{WN}\sum_{n\in[N]}f(Wn+b)\ll1.	
\]
Therefore, using the fact that $(g_i(n))_{n\in[N]}$ is totally  $M^{-cA+{O(1)}}$-equidistributed, and noting that $\norm{F_{i,j}}_{\lip}\ll M^{-O(1)}$ and $\int_{H_i/\Lambda_i}F_{i,j}=0$, we can get
\begin{multline*}
\frac{\phi(W)}{WN}\sum_{i,j}|\E_f(N;W,b)|\cdot\Bigabs{\sum_{n\in P_{i,j}}F_{i,j}\bigbrac{g_i(n)\Lambda_i}}\ll\frac{\phi(W)}{WN}\sum_{i,j}\Bigabs{\sum_{n\in P_{i,j}}F_{i,j}\bigbrac{g_i(n)\Lambda_i}}\\
\ll M^{-cA+O(1)}\frac{\phi(W)}{WN}\sum_{i,j}|P_{i,j}|\ll M^{-cA+O(1)}.
\end{multline*}

In view of (\ref{finalre}) and above  inequality, we see that to prove Theorem \ref{main} it suffices to show that
\[
\frac{\phi(W)}{WN}\sum_{i,j}\sum_{n\in P_{i,j}}f(Wn+b)F_{i,j}\bigbrac{g_i(n)\Lambda_i}\ll \frac{1}{\log N}.
\]
And this is the main business of the next two sections.

\section{Applying the Approach of Montgomery and Vaughan}\label{apply-mv}

For clarity and completeness, we restate our task here again. Let $A>1$ be a large number and $\log N\leq M\leq (\log N)^{O_A(1)}$. Let $G'\subseteq G$ be a subgroup of $G$, $g'\in\poly(\Z,G_\bullet)$ and $\bigbrac{g'(n)}_{n\in[N]}$ be a totally $M^{-cA}$-equidistributed sequence for some constant $c>0$. Suppose that $P_{i,j}$ are pairwise disjoint arithmetic progressions with common difference $q\leq M$ and of length $|P_{i,j}|\geq\frac{N}{qM^2}$, and $\sqcup_{i,j}P_{i,j}=[N]$. Suppose that $(\epsilon_{i,j})_{i,j}$ and $(\gamma_i)_i$ are sequences of $G$. Set $H_i=\gamma_i^{-1}G'\gamma_i$, $\Lambda_i=\Gamma\cap H_i$, and $\mathcal Y_i$ is $M^{O(1)}$-rational Mal'cev basis adapted to filtration $(H_i)_{\bullet}$. Suppose that  $F_{i,j}(x\Lambda_i)=F(\epsilon_{i,j}\gamma_ix\Gamma)$ with $\norm{F_{i,j}}_{\lip}\ll M^{-O(1)}$ and $\int_{H_i/\Lambda_i}F_{i,j}=0$, and $\bigbrac{g_i(n)\Lambda_i}_{n\in[N]}$ is totally $M^{-A}$-equidistributed with $g_i=\gamma_i^{-1}g'\gamma_i$. We'll focus on handling the expression
\[
\frac{\phi(W)}{WN}\sum_{i,j}\sum_{n\in P_{i,j}}f(Wn+b)F_{i,j}(g_i(n)\Lambda_i),
\]
where $f$ is an arbitrary function from $\mathcal M'$.

On employing the approach of Montgomery and Vaughan, we'd like to add a logarithmic factor into above expression, and the aim of this section is to transfer the question to understand the equidistribution matter along with variables in terms of the prime numbers.

\begin{lemma}\label{mv}
Suppose that $H_i/\Lambda_i$ are nilmanifolds, and that for each $i$ there is a $M^{O(1)}$-rational Mal'cev basis $\mathcal Y_i$  adapted to filtration $(H_i)_{\bullet}$, where  $\log N\leq M\leq (\log N)^C$. Suppose that  $P_{i,j}$ are disjoint arithmetic progressions with common difference $q\leq M$ and length $\Omega(\frac{N}{qM^2})$, and the union of  $P_{i,j}$ is the discrete interval $[N]$. Assume that $g_i\in\poly(\Z,(H_i)_\bullet)$ are polynomial sequences and $F_{i,j}:H_i/\Lambda_i\to\C$ are bounded functions, then for any multiplicative function $f\in\mathcal M'$, we have
\begin{multline*}
\frac{\phi(W)}{WN}\sum_{i,j}\sum_{n\in P_{i,j}}f(Wn+b)F_{i,j}(g_i(n)\Lambda_i)\\
\ll\frac{1}{\log N}\biggabs{\frac{\phi(W)}{WN}\sum_{i,j}\sum_{pn\in W\cdot P_{i,j}+b}\log p\,f(p)f(n)F_{i,j}\Bigbrac{g_i\bigbrac{\frac{pn-b}{W}}\Lambda_i}}+\frac{1}{\log N}.	
\end{multline*}

\end{lemma}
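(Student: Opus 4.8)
The plan is to adapt the logarithmic-weighting device of Montgomery and Vaughan to the present $W$-tricked, $F$-twisted setting; note first that the nilmanifold structure is irrelevant here, the only features of the data that will be used being that each $F_{i,j}(g_i(n)\Lambda_i)$ is (after normalising, $\|F_{i,j}\|_\infty\leq1$) a function of $n$ alone, and that the $P_{i,j}$ partition $[N]$. Put $x:=WN+b$, so that $Wn+b\leq x$ for all $n\in[N]$ and $\log N\leq\log x\ll\log N$, and insert the trivial decomposition $\log x=\log(Wn+b)+\log\frac{x}{Wn+b}$ into the left-hand side. The contribution of $\log\frac{x}{Wn+b}$ is an error term: bounding $|F_{i,j}|\leq1$ and summing over the partition it is at most $\frac{\phi(W)}{WN}\sum_{n\leq N}|f(Wn+b)|\log\frac{2N}{n}$, and by Cauchy--Schwarz together with (\ref{wl2}) (which gives $\sum_{n\leq N}|f(Wn+b)|^2\ll WN/\phi(W)$) and the elementary bound $\sum_{n\leq N}\log^2(2N/n)\ll N$, this is $\ll(\phi(W)/W)^{1/2}\ll1$; in particular the small-$n$ terms need no separate treatment.

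For the $\log(Wn+b)$ piece I would invoke the multiplicative identity $f(a)\log a=L(a)+E(a)$, valid for every $a\geq1$, where $L(a):=\sum_{pm=a}\log p\,f(p)f(m)$ is summed over all factorisations with $p$ prime, and --- expanding $\log a=\sum_{p\mid a}v_p(a)\log p$ and using multiplicativity on the coprime parts --- $E(a)=f(a)\sum_{p^2\mid a}v_p(a)\log p-\sum_{p^2\mid a}\log p\,f(p)f(a/p)$ is supported on $a$ divisible by the square of a prime. Since $F_{i,j}(g_i(\cdot)\Lambda_i)$ depends only on $n$, regrouping the factorisations $Wn+b=pm$ into pairs $(p,m)$ with $pm\equiv b\pmod W$ and $(pm-b)/W\in P_{i,j}$ shows that the contribution of $L$ to $\frac{\phi(W)}{WN}\sum_{i,j}\sum_{n\in P_{i,j}}f(Wn+b)\log(Wn+b)F_{i,j}(g_i(n)\Lambda_i)$ is exactly the prime-variable sum on the right-hand side of the lemma. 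Thus, once the contribution of $E$ is shown to be $O(1)$, combining the two reductions gives $\log x\cdot(\text{LHS})=(\text{prime-variable sum})+O(1)$, and dividing by $\log x\geq\log N$ proves the lemma.

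It therefore remains to prove $\mathcal E_1+\mathcal E_2\ll1$, where $\mathcal E_1:=\frac{\phi(W)}{WN}\sum_{n\leq N}|f(Wn+b)|\sum_{p^2\mid Wn+b}v_p(Wn+b)\log p$ and $\mathcal E_2:=\frac{\phi(W)}{WN}\sum_{n\leq N}\sum_{p^2\mid Wn+b}\log p\,|f(p)|\,|f((Wn+b)/p)|$. For $\mathcal E_1$, Cauchy--Schwarz and (\ref{wl2}) reduce the task to the estimate $\sum_{n\leq N}\bigl(\sum_{p^2\mid Wn+b}v_p(Wn+b)\log p\bigr)^2\ll N$, which follows on expanding the square, using $\#\{n\leq N:d\mid Wn+b\}\ll N/d+1$ (valid since $\gcd(b,W)=1$), and noting that the main term is $N\sum_{d_1,d_2}\frac{\log p_1\log p_2}{\lcm(d_1,d_2)}\ll N$ over prime powers $d_i=p_i^{j_i}$ with $j_i\geq2$, while the diagonal and lower-order terms total $O((WN)^{1/2}\log N)=o(N)$. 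For $\mathcal E_2$ I would parametrise $Wn+b=pm$ with $p\mid m$ and write $m=p^{j}m'$ with $p\nmid m'$ and $j\geq1$; then $m'$ runs over $m'\leq x/p^{j+1}$ in a fixed reduced residue class mod $W$. For the $j=1$ terms one uses $f(pm')=f(p)f(m')$ and then Cauchy--Schwarz with (\ref{wl2}) on the sum over $m'$, arriving at $\ll\frac{\phi(W)^{1/2}}{W^{1/2}}\cdot\frac{x}{WN}\sum_p\frac{\log p\,|f(p)|^2}{p^2}\ll1$, the series converging by (\ref{lp2}) and partial summation. For the $j\geq2$ terms one can afford the crude pointwise bounds $|f(p)|\ll(p/\log p)^{1/2}$ (from (\ref{lp2})) and $|f(p^{j})|\ll p^{j/2}$ (from (\ref{fl2})), since the extra saving $p^{-(j+1)}$ from the range of $m'$ leaves the absolutely convergent series $\sum_{j\geq2}\sum_p\frac{(\log p)^{1/2}}{p^{(j+1)/2}}$, giving again $\ll(\phi(W)/W)^{1/2}\ll1$. (A short case split according to whether $p\lesssim N^{1/2}$ handles the few residue classes that are too short for (\ref{wl2}) to apply directly.)

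The step I expect to require the most care is the estimation of $\mathcal E_2$: since functions in $\mathcal M'$ are not assumed bounded at primes or prime powers, the trivial bound $|f(p)|=O(1)$ is unavailable, and one must play the averaged hypotheses (\ref{lp2}), (\ref{fl2}), (\ref{wl2}) against the sparsity of integers divisible by a prime square --- extracting a genuine $|f(p)|^2$ from the $j=1$ contribution to feed into (\ref{lp2}), while disposing of the higher prime-power debris using only pointwise consequences of the $L^2$-hypotheses.
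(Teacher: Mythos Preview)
Your argument is correct and is essentially the paper's: insert $\log x=\log(Wn+b)+\log\frac{x}{Wn+b}$, dispose of the second piece by Cauchy--Schwarz and (\ref{wl2}), and expand the first via a multiplicative identity to isolate the prime-variable main term plus an error supported on integers divisible by a prime square, bounded using (\ref{lp2}) and (\ref{wl2})/(\ref{fl2}). The paper writes $\log n=\sum_{d\mid n}\Lambda(d)$ rather than your $f\log=L+E$ identity and organises the error into the pieces $f(pn)-f(p)f(n)$ and $\sum_{k\geq2}\Lambda(p^k)f(p^kn)$ (bounding them via (\ref{wl1}) and Cauchy--Schwarz over prime powers instead of your second-moment estimate for $\mathcal E_1$ and pointwise $|f(p)|\ll(p/\log p)^{1/2}$, $|f(p^j)|\ll p^{j/2}$ for $\mathcal E_2$), but the two arguments are equivalent in substance.
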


\begin{proof}
 We start the proof by adding a suitable log-factor into the target expression as follows
\[
\frac{\phi(W)}{WN}\sum_{i,j}\sum_{n\in P_{i,j}}f(Wn+b)F_{i,j}(g_i(n)\Lambda_i)\log\frac{WN+b}{Wn+b}.
\]
Directly, on the one hand, it equals to
\[
\frac{\phi(W)}{WN}\sum_{i,j}\sum_{n\in P_{i,j}}\Bigset{\log(WN+b)-\log(Wn+b)}f(Wn+b)F_{i,j}(g_i(n)\Lambda_i).	
\]
On the other hand, from the assumption that $F_{i,j}$ are bounded functions and all progressions $\set{P_{i,j}}_{i,j}$ form a partition of $[N]$,  making use of Cauchy-Schwarz inequality and condition (\ref{wl2}) gives that
\begin{multline*}
\frac{\phi(W)}{WN}\sum_{i,j}\sum_{n\in P_{i,j}}f(Wn+b)F_{i,j}(g_i(n)\Lambda_i)\log\frac{WN+b}{Wn+b}\\
\leq\biggbrac{\frac{\phi(W)}{WN}\sum_{n\in[N]}|f(Wn+b)|^2}^{1/2}\biggbrac{\frac{\phi(W)}{WN}\sum_{n\in[N]}\bigbrac{\log(WN+b)-\log(Wn+b)}^2}^{1/2}\\
\ll	\biggbrac{\frac{\phi(W)}{WN}\sum_{n\in[N]}\bigbrac{\log^2 (N+\frac{b}{W})-2\log (N+\frac{b}{W})\log (n+\frac{b}{W})+\log^2(n+\frac{b}{W})}}^{1/2}\ll1.
\end{multline*}
Combining the above two estimates it turns out that
\begin{multline*}
\frac{\phi(W)}{WN}\sum_{i,j}\sum_{n\in P_{i,j}}f(Wn+b)F_{i,j}(g_i(n)\Lambda_i)\\
\ll\frac{1}{\log N}\biggabs{\frac{\phi(W)}{WN}\sum_{i,j}\sum_{n\in P_{i,j}}\log(Wn+b)f(Wn+b)F_{i,j}(g_i(n)\Lambda_i)}+\frac{1}{\log N}.	
\end{multline*}

By denoting $x=Wn+b$ with $n\in P_{i,j}$, one has $x\in W\cdot P_{i,j}+b$, and then rename $x$ as $n$ to get that
\[
\sum_{n\in P_{i,j}}\log(Wn+b)f(Wn+b)F_{i,j}\bigbrac{g_i(n)\Lambda_i}=\sum_{n\in W\cdot P_{i,j}+b}\log n\,f(n)F_{i,j}\Bigbrac{g_i\bigbrac{\frac{n-b}{W}}\Lambda_i}
\]

The formula $\log n=\sum_{m|n}\Lambda(m)$, which can be viewed as a generation of the fundamental theorem of arithmetic, then yields that the above identity is also equal to
\[
\sum_{mn\in W\cdot P_{i,j}+b}\Lambda(m)f(mn)F_{i,j}\Bigbrac{g_i\bigbrac{\frac{mn-b}{W}}\Lambda_i}.
\]
In light of $\Lambda(m)$ is zero unless $m$ is a prime power, we may rewrite the above expression as
\begin{multline*}
\sum_{pn\in W\cdot P_{i,j}+b}\log p\,f(p)f(n)F_{i,j}\Bigbrac{g_i\bigbrac{\frac{pn-b}{W}}\Lambda_i}\\
+\sum_{pn\in W\cdot P_{i,j}+b}\log p\,\bigset{f(pn)-f(p)f(n)}F_{i,j}\Bigbrac{g_i\bigbrac{\frac{pn-b}{W}}\Lambda_i}\\
+\sum_{p,k\geq2}\sum_{p^kn\in W\cdot P_{i,j}+b}\log p\,f(p^kn)F_{i,j}\Bigbrac{g_i\bigbrac{\frac{p^kn-b}{W}}\Lambda_i}.
\end{multline*}
We then quickly see that this lemma follows if we can prove that the contribution from the collection of all progressions $P_{i,j}$ in terms of the above latter two  terms times $\frac{\phi(W)}{WN}$ is bounded by 1.

Since $f$ is multiplicative, $f(pn)-f(p)f(n)$ vanishes unless $p$ is a divisor of $n$. On recalling the facts that $F_{i,j}$ is a bounded function and the collection of $P_{i,j}$ forms a partition of $[N]$, one has
\begin{multline*}
\biggabs{\frac{\phi(W)}{WN}\sum_{i,j}\twosum{pn\in W\cdot P_{i,j}+b}{p|n}\log p\,\bigset{f(pn)-f(p)f(n)}F_{i,j}\Bigbrac{g_i\bigbrac{\frac{pn-b}{W}}\Lambda_i}}\\
\ll\frac{\phi(W)}{WN}\sum_{p,k\geq2}\twosum{p^kn\in[WN+b]}{p^kn\equiv b\mrd{W}}\log p\,\Bigset{|f(p^k)||f(n)|+|f(p)||f(p^{k-1})||f(n)|}.	
\end{multline*}
 We deal with above first term at first. As $(b,W)=1$ and $1\leq b\leq W\ll(\log N)^C$ for some constant $C>0$, so that $b$ is quite smaller than $N$, and thus, the first summation term is bounded by
 \[
 \frac{\phi(W)}{WN}\twosum{p,k\geq2}{(p,W)=1}\log p\,|f(p^k)|\twosum{n\leq WN/p^k}{n\equiv b\bar{p^k}\mrd{W}}|f(n)|\ll\twosum{p,k\geq2}{(p,W)=1}\frac{\log p\,|f(p^k)|}{p^k},
 \]
where we have applied the following inequality (\ref{wl1}) to the inner summation.

On applying Cauchy-Schwarz inequality another time,  one can deduce from condition (\ref{lp2}) that
\[
\twosum{p,k\geq2}{(p,W)=1}\frac{\log p\,|f(p^k)|}{p^k}\leq\Bigbrac{\sum_{p,k\geq2}\frac{(\log p)^2}{p^{3k/4}}}^{1/2}\Bigbrac{\sum_{p,k\geq2}\frac{|f(p^k)|^2}{p^{5k/4}}}^{1/2}\ll1.
\]
Besides, there is no difficulty to find that in the same manner we can also have
\[
\biggabs{\frac{\phi(W)}{WN}\sum_{i,j}\sum_{p,k\geq2}\sum_{p^kn\in W\cdot P_{i,j}+b}\log p\,f(p^kn)F_{i,j}\Bigbrac{g_i\bigbrac{\frac{p^kn-b}{W}}\Lambda_i}}\ll1.
\]
It, therefore, remains to calculate the following
\begin{multline*}
\frac{\phi(W)}{WN}\twosum{p,k\geq2}{(p,W)=1}\log p\,|f(p)||f(p^{k-1})|\twosum{n\leq WN/p^k}{n\equiv b\bar{p^k}\mrd{W}}|f(n)|\\
\ll\sum_{p,k\geq 1}\frac{|f(p)|}{p^{1/3}}\frac{|f(p^k)|}{p^{k/3}}\log p\,p^{-2/3}p^{-2k/3}.
\end{multline*}
Using the inequality $ab\leq|a|^2+|b|^2$, together with condition (\ref{lp2}), it is bounded by
\[
\sum_{p\geq 1}\frac{|f(p)|^2}{p^{4/3}}\log p\sum_{k\geq 1}p^{-2k/3}+\sum_{p,k\geq1}\frac{|f(p^k)|^2}{p^{4k/3}}\log p\, p^{-2/3}\ll1	.
\]

We then finish the proof of this lemma.

\end{proof}

The next step is to decompose the summation range of prime $p$ into two domains, according to whether it is close to $N$ or not. Let
\begin{align}\label{uv}
U=N^{2/3}	
\end{align}
be a cutoff parameter, it is clear that,
\begin{multline}\label{decom}
\frac{\phi(W)}{WN}\sum_{i,j}\sum_{pn\in W\cdot P_{i,j}+b}\log p\,f(p)f(n)F_{i,j}\Bigbrac{g_i\bigbrac{\frac{pn-b}{W}}\Lambda_i}\\
=\frac{\phi(W)}{WN}\sum_{i,j}\biggset{\sum_{p\leq U}+\sum_{U<p\leq N}}\sum_{pn\in W\cdot P_{i,j}+b}\log p\,f(p)f(n)F_{i,j}\Bigbrac{g_i\bigbrac{\frac{pn-b}{W}}\Lambda_i}.	
\end{multline}
And we are going to handle the above two summations respectively in the next section. The proof idea can be explained as follows. Suppose that $g$ is a highly equidistributed polynomial,  when $p$ is small, then $g_p=g(p\,\cdot)$ would also look like equidistributed and thus the summation $\sum_n F\bigbrac{g_p(n)\Gamma}$ should have some cancellation; whilst when $p$ is large, since $pn$ is no more than $WN$ with $W\ll(\log N)^C$, $n$ should be small, and in this case the sum $\sum_p F\bigbrac{g_n(p)\Gamma}$ also has some cancellation.

\section{Equidistribution of Product Nilsequences}

In this section,  we'll handle the two representations in (\ref{decom}) in turns.

\subsection{When $p$ is small}

\begin{lemma}\label{pissmall}
\[
\frac{\phi(W)}{WN}\sum_{i,j}\sum_{p\leq U}\sum_{pn\in W\cdot P_{i,j}+b}\log p\,f(p)f(n)F_{i,j}\Bigbrac{g_i\bigbrac{\frac{pn-b}{W}}\Lambda_i}\ll1,
\]
where all of the parameters are in Section \ref{apply-mv}.	
\end{lemma}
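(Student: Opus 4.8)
The plan is to run the Montgomery--Vaughan bilinear strategy in the form adapted to nilsequences in \cite{GT12b}: the point is that a dilation $g'(p\,\cdot)$ of the highly equidistributed sequence $g'$ stays jointly equidistributed for all but a sparse set of pairs of small primes. I would first decompose the range $p\le U=N^{2/3}$ dyadically, $\sum_{p\le U}=\sum_{P}\sum_{p\sim P}$ over powers of two $P\le U$; it suffices to save, say, a factor $(\log N)^{-2}$ in each of the $O(\log N)$ blocks. Fix such a block. Since the $P_{i,j}$ partition $[N]$, after the substitution $m=pn$ (so that $r:=(pn-b)/W$ runs over $[N]$ as $(i,j)$ and $n$ vary) the inner sum over $(i,j)$ and $n$ collapses to
\[
\sum_{p\sim P}\log p\,f(p)\sum_{n} f(n)\,\Phi\!\left(\tfrac{pn-b}{W}\right),
\]
where $\Phi(r)=F_{i(r),j(r)}\big(g_{i(r)}(r)\Lambda_{i(r)}\big)$ is a fixed $1$-bounded ``piecewise nilsequence'' on $[N]$, each piece being of the form $F_{i,j}(g_i(\cdot)\Lambda_i)$ with $\int F_{i,j}=0$; here $n$ runs over an interval of length $\asymp N/P$ inside a fixed residue class modulo $W$ (forced by $\gcd(b,W)=1$).

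Next I would apply Cauchy--Schwarz in $n$ to strip off the unbounded weight $f(n)$, using $\E_{n\le WN/P}|f(n)|^2\ll1$ from \eqref{fl2}. This reduces the block to controlling
\[
\sum_{p,p'\sim P}\log p\,\log p'\,f(p)\overline{f(p')}\sum_{n}\Phi\!\left(\tfrac{pn-b}{W}\right)\overline{\Phi\!\left(\tfrac{p'n-b}{W}\right)},
\]
the residue conditions forcing $p\equiv p'\pmod W$, which is exactly how the factor $\phi(W)/W$ in the normalisation is recovered. Splitting the $n$-range into $O(M^{O(1)})$ subprogressions on which $i(pn),j(pn),i(p'n),j(p'n)$ are all constant, the inner correlation becomes an average of $F_{i,j}\otimes\overline{F_{i',j'}}$ --- which has zero integral since $\int F_{i,j}=0$ --- along $n\mapsto\big(g_i(\tfrac{pn-b}{W}),g_{i'}(\tfrac{p'n-b}{W})\big)$ on the product nilmanifold; up to conjugation by $\gamma_i,\gamma_{i'}$ the underlying polynomial is $\big(g'(pn+\ast),g'(p'n+\ast)\big)$ on $(G'/\Gamma')^2$, whose derivatives are governed by the $p$- and $p'$-dilates of $g'$.

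The crux is to show that all but $\ll(\delta')^{O(1)}P^2$ pairs $(p,p')$ give a totally $\delta'$-equidistributed product sequence, with $\delta'=M^{-A_1}$ for a constant $A_1=A_1(m_G,d)$ to be chosen. I would apply the quantitative Leibman theorem (Proposition \ref{leibman}) on the product nilmanifold: a failure produces a nonzero horizontal character $(\eta_1,\eta_2)$ of complexity $\ll(\delta')^{-O(1)}$ with $\big\|\eta_1\circ g'(p\,\cdot)+\eta_2\circ g'(p'\,\cdot)\big\|_{C^\infty[I]}$ small, $I$ the relevant interval of length $\asymp N/(M^{O(1)}P)$, so that $p|I|\asymp p'|I|\asymp N/M^{O(1)}$. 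Expanding in the binomial basis turns this into a Diophantine inequality linking $p^j,(p')^j$ to the coefficients of $\eta_\ell\circ g'$, which at the witnessing degree $j$ must satisfy $N^j\|\beta^{(\ell)}_j\|\gg M^{c_0 A}$ for some $c_0>0$ --- forced by applying Proposition \ref{leibman} to the totally $M^{-cA}$-equidistributed $g'$ itself. For each of the $\ll M^{O(1)}$ admissible $(\eta_1,\eta_2)$, a Weyl/Vinogradov estimate for prime exponential sums (for a degree-one obstruction, a Brun--Titchmarsh count of primes in the pertinent residue classes) bounds the number of offending pairs by $\ll(\delta')^{O(1)}P^2$; the $O(P)$ diagonal pairs $p=p'$ are trivial. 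Summing over the blocks --- good pairs contributing $\ll\delta' M^{O(1)}$ and, after one more Cauchy--Schwarz against \eqref{lp2} to absorb $|f(p)f(p')|$, bad pairs contributing $\ll(\delta')^{O(1)}(\log N)^{O(1)}$, per block --- and choosing first $A_1$ large in terms of $m_G,d$ and then $A$ large in terms of $A_1$, yields $\ll1$.

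The step I expect to be hardest is precisely this count of exceptional pairs, uniformly up to $U=N^{2/3}$: when $P$ approaches $N^{2/3}$ the dilate $g'(p\,\cdot)$ is supported on an interval far shorter than $M^{-A}N$, so total equidistribution of $g'$ there is unavailable, and one must instead deduce rationality of the coefficients of $\eta_\ell\circ g'$ from the hypothesis that many pairs are bad and play this off against the equidistribution of $g'$ --- this is the analogue of \cite[Lemmas 3.3 and 3.4]{GT12b} in the present setting. Tracking the lower-order corrections coming from $\binom{pn}{j}=p^j\binom nj+\cdots$, and the bookkeeping for the piecewise structure of $\Phi$, are the routine-but-unpleasant remaining details.
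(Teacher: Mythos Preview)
Your approach is essentially the paper's: dyadic decomposition in $p$, Cauchy--Schwarz in $n$ against \eqref{fl2} and \eqref{lp2}, and then the exceptional-pair count via Proposition~\ref{leibman} plus a Waring-type density lemma for prime powers (the paper packages this as its Lemma~\ref{prose}, proved by the contrapositive argument you sketch and invoking \cite[Lemma~8.4]{Mat} and \cite[Lemma~1.1.14]{Tao} rather than Weyl/Vinogradov directly). The only cosmetic difference is that the paper fixes $(i,j)$ and the residues $p\equiv a_1$, $n\equiv a_2\pmod{Wq}$ \emph{before} Cauchy--Schwarz, so both factors after squaring live on the same $H_i/\Lambda_i$ with a single shift per prime, and your post-hoc splitting of the piecewise $\Phi$ into $O(M^{O(1)})$ pieces is avoided.
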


\begin{proof}

 To begin with, we split $p\leq U$ into dyadic ranges $p\sim N_k$ with $N_k=2^{-k}U$ and $0\leq k\leq\frac{2\log N}{3\log 2}$. Let $P_{i,j}=a_{i,j}+q\cdot[X]$, where $a_{i,j}+q$ is the starting point of the progression $P_{i,j}$ and, by assumption, we have $X\geq\frac{N}{qM^2}$. After decomposing the sum in the congruence condition $pn\equiv Wa_{i,j}+b\, (\mathrm{mod}\, Wq)$ further, we are in the position that
\begin{align}\label{total}
\frac{\phi(W)}{WN}\sum_{i,j,k}\twosum{a_1,a_2\mrd{Wq}}{a_1a_2\equiv Wa_{i,j}+b\mrd{Wq}}\twosum{p\sim N_k}{p\equiv a_1\mrd{Wq}}\sum_{n}\log p\,f(p)f(n) F_{i,j}\Bigbrac{g_i\bigbrac{\frac{pn-b}{W}}\Lambda_i},
\end{align}
where $n$ ranges over $\frac{Wa_{i,j}+b}{p}\leq n\leq\frac{WqX+Wa_{i,j}+b}{p}$ and obeys the congruence condtion $n\equiv a_2\mrd{Wq}$.
Switching the summation order of variables $p$ and $n$, and using Cauchy-Schwarz inequality, one has
\begin{multline}\label{pn}
\twosum{p\sim N_k}{p\equiv a_1\mrd{Wq}}\twosum{\frac{Wa_{i,j}+b}{p}\leq n\leq\frac{WqX+Wa_{i,j}+b}{p}}{n\equiv a_2\mrd{Wq}}\log p\,f(p)f(n) F_{i,j}\Bigbrac{g_i\bigbrac{\frac{pn-b}{W}}\Lambda_i}\\
\leq\biggbrac{\sum_{n}|f(n)|^2}^{1/2}
\biggbrac{\twosum{n}{n\equiv a_2\mrd{Wq}}\biggabs{\twosum{p\sim N_k}{p\equiv a_1\mrd{Wq}}\log p\,f(p)F_{i,j}\Bigbrac{g_i\bigbrac{\frac{pn-b}{W}}\Lambda_i}}^2}^{1/2}	,
\end{multline}
where $n$ ranges over the interval $\frac{Wa_{i,j}+b}{N_k}\leq n\leq\frac{WqX+Wa_{i,j}+b}{N_k}$. The first factor is easily to calculate. Indeed, condition (\ref{fl2}) gives that
\begin{align}\label{f2}
\biggbrac{\sum_{n\leq\frac{WqX+Wa_{i,j}+b}{N_k}}|f(n)|^2-\sum_{n\leq\frac{Wa_{i,j}+b}{N_k}}|f(n)|^2}^{1/2}\ll\bigbrac{\frac{WqX}{N_k}}^{1/2}.
\end{align}
For the second factor, expanding the square to double the variables $p$  and then swapping the order of summation to get that
\begin{align}\label{pp'}
\biggbrac{\twosum{p,p'\sim N_k}{p\equiv p'\equiv a_1\mrd{Wq}}\log p\log p'f(p)f(p')\sum_{n}F_{i,j}\Bigbrac{g_i\bigbrac{\frac{pn-b}{W}}\Lambda_i}\bar{F_{i,j}\Bigbrac{g_i\bigbrac{\frac{p'n-b}{W}}\Lambda_i}}}^{1/2},
\end{align}
where $n$ ranges over $Wa_{i,j}+b\leq pn,p'n\leq WqX+Wa_{i,j}+b$ and is also in the residue class $n\equiv a_2\mrd{Wq}$.

 Let $n=a_2+Wqm$, taking note that $p\equiv a_1\mrd{Wq}$, $a_1a_2\equiv b\mrd{Wq}$, as well as $1\leq a_1\leq Wq$, we can conclude that there must be integers $|a_p|,|a_{p'}|\leq qN_k$ such that $\frac{pn-b}{W}=qpm+a_p$ and $\frac{p'n-b}{W}=qp'm+a_{p'}$ respecitvely. Thus, above inner summation over $n$ is indeed
 \begin{align}\label{fij}
 \sum_{m\in \frac{I}{\max\set{p,p'}}}F_{i,j}\bigbrac{g_i(qpm+a_p)\Lambda_i}\bar{F_{i,j}\bigbrac{g_i(qp'm+a_{p'})\Lambda_i}},
 \end{align}
where $I\subseteq[N/q]$ is a discrete subinterval of length at least $X$.

Thanks to the sequences $g_i$ are totally $M^{-A}$-equidistributed, there is a chance to show that for almost all  pairs of primes $(p,p')\in(N_k,2N_k]^2$ the multiplicative sequence $\bigbrac{g_i(qp\cdot+a_p),g_i(qp'\cdot+a_{p'})}\in\poly(\Z,(H_i)_\bullet\times (H_i)_\bullet)$ is equidistributed whenever this sequence is not too short. The following result is similar to \cite[Proposition 8.1]{Mat}. And we hope the proof could serve as an easy-reading exposition of \cite[Proposition 8.1]{Mat}.

\begin{lemma}[Equidistribution of product nilsequences]\label{prose}
Suppose that $0<\delta<1/2$ is a number. There are two small numbers $0<c<c'<1$ such that the following statement holds uniformly for integers $K$ with $1\leq K\leq \delta^{c'}N$.

Suppose that $H_\bullet$ is a filtration of finite degree and of a finite dimensional nilmanifold $H/\Lambda$. Suppose that $g\in\poly(\Z,H_\bullet)$ is a polynomial  and the sequence $(g(n)\Lambda)_{n\in[N]}$ is totally $\delta$-equidistributed. Suppose that $q\ll\delta^{-O(1)}$, $a_p$ and $a_{p'}$ are integers satisfying $|a_p|,|a_{p'}|\leq qK$, and $I\subseteq[N/q]$ is an interval of length $|I|\geq\delta^{O(1)}\frac{N}{q}$. Assume further that $F:H/\Lambda\to\C$ is a Lipschitz function with $\int_{H/\Lambda}F=0$.

Write $\mathcal E_K$ as the set of pairs $(p,p')\in(K,2K]^2$ for which
\[
\#\bigset{m:m\in \frac{I}{\max\set{p,p'}}}\gg\delta^{O(c)}N/K,
\]
and
\[
\Bigabs{\sum_{m\in \frac{I}{\max\set{p,p'}}}F(g(qpm+a_p)\Lambda)\bar{F(g(qp'm+a_{p'})\Lambda)}}>(1+\norm{F}_\lip)\delta^{O(c)} N/K.
\]
Then we have
\[
\#\mathcal E_K\ll\delta^{O(c)}K^2.
\]

\begin{proof}
In the following, we only consider those pairs $(p,p')\in(K,2K]^2$ such that
\[
\#\bigset{m:m\in \frac{I}{\max\set{p,p'}}}\gg\delta^{O(c)}N/K.
\]
Now assume for contradiction that $\#\mathcal E_K\gg\delta^{O(c)}K^2$, which means that there are $\gg\delta^{O(c)}K^2$ pairs of primes $(p,p')\in(K,2K]^2$ such that
\[
\Bigabs{\sum_{m\in \frac{I}{\max\set{p,p'}}}F(g(qpm+a_p)\Lambda)\bar{F(g(qp'm+a_{p'})\Lambda)}}>(1+\norm{F}_\lip)\delta^{O(c)} N/K.
\]
For such a pair $(p,p')\in(K,2K]^2$, we define  $g_{p,p'}(n)=\bigbrac{g(qpn+a_p),g(qp'n+a_{p'})}$ as a new polynomial. Then we have $g_{p,p'}\in\poly\bigbrac{\Z,H_\bullet\times H_\bullet}$. Besides, set a Lipchitz function $\tilde F:H\times H\to\C$ via $\tilde F(\gamma,\gamma')=F(\gamma)\bar{F(\gamma)}$, then $\int_{H/\Lambda\times H/\Lambda}\tilde F=0$ and $\|\tilde F\|_\lip\ll\norm{F}_\lip$. Moreover, the above inequality can be rewritten as --- there are $\gg\delta^{O(c)}K^2$  pairs of $(p,p')\in(K,2K]^2$ such that
\[
\biggabs{\sum_{m\in \frac{I}{\max\set{p,p'}}}\tilde F(g_{p,p'}(m)\Lambda\times\Lambda)}\gg(1+\|\tilde F\|_\lip)\delta^{O(c)} N/K.
\]

Clearly, it can be deduced from Definition \ref{almost-equidistribution} and the assumption $\#\bigset{m:m\in \frac{I}{\max\set{p,p'}}}\gg\delta^{O(c)}N/K$ that there are $\gg\delta^{O(c)}K^2$ of $(p,p')\in(K,2K]^2$ such that the corresponding polynomial sequences $\bigbrac{g_{p,p'}(n)}_{n\in\frac{I}{\max\set{p,p'}}}$ fails to be $\delta^{O(c)}$-equidistributed. It then follows from Theorem \ref{leibman} that for each pair of primes $(p,p')$ there is a nontrivial horizontal character $0<|\psi_{p,p'}|\ll\delta^{-O(c)}$ such that
\[
\norm{\psi_{p,p'}\circ g_{p,p'}}_{C^\infty\frac{I}{\max\set{p,p'}}}\ll\delta^{O(c)}.
\]
On the other hand, for any nontrivial horizontal character $\psi:H\times H\to\T$ of modulus $\ll\delta^{-O(c)}$, define a set
\[
S_\psi=\set{(p,p')\in(K,2K]^2:\psi_{p,p'}=\psi}.
\]
Since the total number of characters $\psi_{p,p'}$ is at least $\Omega(\delta^{O(c)}K^2)$, it can be deduced from pigeonhole principle that there is a nontrivial horizontal character $0<|\psi|\ll\delta^{-O(c)}$ such that $|S_{\psi}|\gg \delta^{O(c)}K^2$. Fixing a character $\psi$ with $| S_{\psi}|\gg \delta^{O(c)}K^2$, we thus have
\[
\norm{\psi\circ g_{p,p'}}_{C^\infty\frac{I}{\max\set{p,p'}}}\ll\delta^{O(c)}
\]
holds for $\gg\delta^{O(c)}K^2$ pairs of $(p,p')\in(K,2K]^2$. Taking $\psi=\psi_1\oplus\psi_2$ where $\psi_1,\psi_2:H\to\T$ and $\psi_1$ is non-trivial, and let
\begin{eqnarray*}
(\psi_1\circ g)(n)=\alpha_dn^d+\dots+\alpha_1n+\alpha_0;\\
(\psi_2\circ g)(n)=\alpha_d'n^d+\dots+\alpha_1'n+\alpha_0'.	
\end{eqnarray*}
Hence,
\begin{multline*}
(\psi\circ g_{p,p'})(n)=\alpha_d(qpn+a_p)^d+\dots+\alpha_0+\alpha_d'(qp'n+a_{p'})^d+\dots+\alpha_0'\\
=\sum_{1\leq j\leq d}n^j\sum_{j\leq i\leq d}\binom i jq^j\bigbrac{p^ja_p^{i-j}\alpha_i+p'^ja_{p'}^{i-j}\alpha_i'}+\tilde\alpha_0.	
\end{multline*}
From the definition of smoothness norm in Definition \ref{smooth-norm}, one may find that there are $\gg\delta^{O(c)}K^2$ of pairs $(p,p')\in(K,2K]^2$ such that
\begin{align}\label{induction}
\norm{q^j\bigbrac{p^ja_p^{i-j}\alpha_i+p'^ja_{p'}^{i-j}\alpha_i'}}\ll\delta^{-O(c)}\Bigbrac{\frac{K}{N}}^j	
\end{align}
for all $1\leq j\leq d$, as $\#\bigset{m:m\in \frac{I}{\max\set{p,p'}}}\gg\delta^{O(c)}N/K$. We claim that there is a non-zero integer $0<q'\ll\delta^{-O(c)}$ such that
\begin{align}\label{claim}
\norm{q'\alpha_j}\ll\delta^{-O(c)}N^{-j}\qquad\text{for all } 1\leq j\leq d.	
\end{align}
When $j=d$, from (\ref{induction}), there are $\gg\delta^{O(c)}K^2$ of pairs $(p,p')\in(K,2K]^2$ such that $\norm{q^dp^d\alpha_d+q^dp'^d\alpha_d'}\ll\delta^{-O(c)}\bigbrac{\frac{K}{N}}^{d}$. By pigeonhole principle there is some $p'\sim K$ such that   at least  $\gg\delta^{O(c)}K$ of $p\sim K$ satisfying the sequence $\set{q^dp^d\alpha\mrd{\Z}}$ stays in an interval of length $O(\delta^{-O(c)}\bigbrac{\frac{K}{N}}^{d})$. Here we assume that  $\delta^{-O(c)}\bigbrac{\frac{K}{N}}^{d}\ll\delta^{O(1)}$, this can be done by taking $0<c'<1$ larger than $c$, recalling that $K\leq\delta^{O(c')}N$ (we'll find that $c$ is sufficiently small). The application of \cite[Lemma 8.4]{Mat} yields that there are $\gg\delta^{O(c)}K^d$ of integers $n\leq2^{3d}K^d$ such that $\set{q^dn\alpha\mrd{\Z}}$ stays in an interval of length $O(\delta^{-O(c)}\bigbrac{\frac{K}{N}}^{d})$. Then  making use of \cite[Lemma 1.1.14]{Tao} together with the fact $q\ll\delta^{-O(c)}$ ensures that there is $0<q_d\ll\delta^{-O(c)}$ such that $\norm{q_d\alpha_d}\ll\delta^{-O(c)}N^{-d}$.

We now assume that there is a group of integers $0<q_{j+1},\dots,q_d\ll\delta^{-O(c)}$ such that $\norm{q_i\alpha_i}\ll\delta^{-O(c)}N^{-i}$ holds for all $j<i\leq d$. And consider the case $j$. Also pigeonhole  gives some $p'\sim K$ such that for at least $\delta^{O(c)}$ proportion of $p\sim K$ such that $\bigset{(\sum_{j\leq i\leq d}q^jp^ja_p^{i-j}\alpha_i)\mrd{\Z}}$ lies in an interval of length at most $O\bigbrac{\delta^{-O(c)}\bigbrac{\frac{K}{N}}^{j}}$. Whilst,  from the assumption one has when $i>j$,
\[
\norm{q^jp^ja_p^{i-j}\alpha_i}\leq p^ja_p^{i-j}\norm{q^j\alpha_i}\ll\delta^{-O(c)}\Bigbrac{\frac{K}{N}}^i,
\]
 as $|a_p|\leq qK$, $p\sim K$ and $\norm{q_i\alpha_i}\ll\delta^{-O(c)}N^{-i}$ for some $q_i\ll\delta^{-O(c)}$. This means that the sequence $\set{q^jp^ja_p^{i-j}\alpha_i\mrd\Z}$ moves very slowly apart from the interval $\bigset{(\sum_{j\leq i\leq d}q^jp^ja_p^{i-j}\alpha_i)\mrd{\Z}}$, and thus, we can conclude that there are at least $\delta^{O(c)}$ proportion of $p\sim K$ such that $\set{q^jp^j\alpha_j\mrd{\Z}}$ stays in an interval of length at most $O\bigbrac{\delta^{-O(c)}\bigbrac{\frac{K}{N}}^{j}}$. Another application of \cite[Lemma 8.4]{Mat} and \cite[Lemma 1.1.14]{Tao} gives an integer $0<q_j\ll\delta^{-O(c)}$ such that $\norm{q_j\alpha_j}\ll\delta^{-O(c)}N^{-j}$. Therefore, the claim follows from taking $q'$ as the least common multiple of $q_1,\dots,q_d$.

Now write $\psi'=q'\cdot \psi$, then we see that $\norm{\psi'\circ g(n)}\ll\delta^{-O(c)}\frac{n}{N}$ for every positive integer $n$. Let $N'=\delta^{O(cC)}N$ for some large $C\geq1$ such that when $n\in[N']$, we can get
\[
\norm{\psi'\circ g(n)}\leq\frac{1}{10}.
\]
We then set $F':H/\Lambda\to\C$ to be the function $F'=\eta\circ \psi'$, where $\eta:\T\to\C$ is a function with mean zero, bounded Lipschitz norm and equals 1 on the interval $[-\frac{1}{10},\frac{1}{10}]$. Then we have $\int_{H/\Lambda}F'=0$, $\norm{F'}_\lip\ll\delta^{-O(c)}$ and,
\[
\bigabs{\E_{n\in[N]}F'\bigbrac{g(n)\Lambda}}\geq\delta\norm{F'}_\lip,
\]
provided that $c$ is sufficiently small. Thereby, we conclude the proof by noting that above inequality contradicts with the assumption that $\bigbrac{g(n)\Lambda}_{n\in[N]}$ is $\delta$-equidistributed.

\end{proof}

\end{lemma}

We now back our attention to the representation (\ref{pn}) and utilizing the full strength of Lemma \ref{prose}.
Recalling that the sequence $\bigbrac{g_i(n)}_{n\in[N]}$ is totally $M^{-A}$-equidistributed for each $i$, where
\begin{align}\label{m}
\log N\leq M\ll(\log N)^{O_A(1)},	
\end{align}
and $q\leq M$ and $|I|\geq\frac{N}{qM^2}$, one can invoke Lemma \ref{prose} with $\delta=M^{-A}$. To begin with, we split the sum range over $p,p'\sim N_k$ into two cases according to whether $\#\bigset{m:m\in \frac{I}{\max\set{p,p'}}}\gg M^{-O(cA)}N$ or not. In the case of $\bigset{m:m\in \frac{I}{\max\set{p,p'}}}$ does not contain many elements (less than $CM^{-O(cA)}N$), one can show that the total contribution in $(\ref{total})$ is negligible.  In practice, The fact that $F_{i,j}$ is a bounded function leads to
\[
 \sum_{m\in \frac{I}{\max\set{p,p'}}}F_{i,j}\bigbrac{g_i(qpm+a_p)\Lambda_i}\bar{F_{i,j}\bigbrac{g_i(qp'm+a_{p'})\Lambda_i}}\ll\#\bigset{m:m\in \frac{I}{\max\set{p,p'}}}.
\]
From the perspective of the assumption, it is bounded by $O(M^{-O(cA)}N)$.
Substituting it into (\ref{pp'}) to obtain that (\ref{pp'}) is no more than
\begin{multline*}
\Bigbrac{M^{-O(cA)}N\sum_{p,p'\sim N_k}\log p\log p'|f(p)||f(p')|}^{1/2}=M^{-O(cA)}N^{1/2}\sum_{p\sim N_k}\log p\,|f(p)|\\\ll M^{-O(cA)}(NN_k)^{1/2},	
\end{multline*}
\;where the last inequality follows from condition (\ref{lp2}) and Cauchy-Schwarz inequality. Substituting this inequality and (\ref{f2}) into (\ref{pn}) yields that $(\ref{pn})$ is bounded by $O( M^{-O(cA)-1}W^{1/2}N)$, due to $X\leq\frac{N}{qM^2}$. Thus, it can be seen that the contribution to (\ref{total}) is bounded by
\[
qW^{1/2}M^{-O(cA)}\sum_{k\leq \log N}\#\bigset{(a_1,a_2)\in[0,W-1)^2:a_1a_2\equiv b\mrd{W}}\ll1
\]
just by using the facts that $W\leq(\log N)^C$, $q\leq M$ and (\ref{m}) and by enlarging $A>1$ if necessary.

Hence, we just need to consider those pairs $(p,p')$ for which $\Bigabs{\frac{I}{\max\set{p,p'}}}\gg M^{-O(cA)}N$. Decomposing the summation over prime pairs $(p,p')$ in (\ref{pp'}) according to whether $(p,p')$ in the exceptional set $\mathcal E_k$, it allows us to conclude that (\ref{pp'}) is indeed a half power of the following expression
\begin{multline*}
\Bigset{\sum_{(p,p')\in\mathcal E_{N_k}}+\sum_{(p,p')\in(N_k,2N_k]^2\backslash\mathcal E_{N_k}}}\log p\log p'|f(p)||f(p')|\\
\times\biggabs{ \sum_{m\in \frac{I}{\max\set{p,p'}}}F_{i,j}
\bigbrac{g_i(qpm+a_p)\Lambda_i}\bar{F_{i,j}\bigbrac{g_i(qp'm+a_{p'})\Lambda_i}}}.
\end{multline*}
The application of Lemma \ref{prose} shows that $\#\mathcal E_{N_k}\ll M^{-O(cA)} N_k^2$, and thus the above first summation term is bounded by
\[
\sum_{(p,p')\in(N_k,2N_k]^2}\log p\log p'|f(p)||f(p')|1_{(p,p')\in\mathcal E_{N_k}}\cdot\frac{|I|}{\max\set{p,p'}},
\]
due to $F_{i,j}$ is a bounded function. By making use of Cauchy-Schwarz inequality and condition (\ref{lp2}), as well as noting that  $|I|=\Theta\bigbrac{\frac{N}{qM^2}}$, one may find it is bounded by
\begin{multline*}
\frac{N}{qM^2N_k}\Bigbrac{\sum_{p\sim N_k}(\log p\,f(p))^2}\Bigbrac{\#\mathcal E_{N_k}}^{1/2}\\\ll  q^{-1}M^{-O(cA)}N\sum_{p\sim N_k}	\log p\,|f(p)|^2\ll M^{-O(cA)}q^{-1}NN_k.
\end{multline*}
Substituting the above inequality and (\ref{f2}) into representation (\ref{total}) and taking $A\geq1$ large enough, the total contribution to (\ref{total}) can be bounded by $O(1)$.

It, therefore, remains to calculate the case $\Bigabs{\frac{I}{\max\set{p,p'}}}\gg M^{-O(cA)}N$ and $(p,p')\not\in\mathcal E_{N_k}$. Another application of Lemma \ref{prose} gives that
\begin{multline*}
\sum_{(p,p')\not\in\mathcal E_{N_k}}\log p\log p'|f(p)||f(p')|\biggabs{\sum_{m\in \frac{I}{\max\set{p,p'}}}F_{i,j}\bigbrac{g_i(qpm+a_p)\Lambda_i}\bar{F_{i,j}\bigbrac{g_i(qp'm+a_{p'})\Lambda_i}}}\\
\ll M^{-O(cA)}\frac{N}{N_k}\biggbrac{\sum_{p\sim N_k}\log p\,f(p)}^2\ll M^{-O(cA)}\frac{N}{N_k}\sum_{p\sim N_k}\log p\sum_{p\sim N_k}\log p|f(p)|^2\\
	\ll M^{-O(cA)} NN_k.
\end{multline*}
And in this case it can also  be shown that the contribution  to (\ref{total}) is  bounded by $O(1)$.

To sum up, in order to prove Lemma \ref{pissmall} we split the prime pairs $(p,p')\in(N_k,2N_k]^2$ in formula (\ref{pp'}) into three cases: the interval $\frac{I}{\max\set{p,p'}}$ is short; there are many elements in the interval $\frac{I}{\max\set{p,p'}}$ but $(p,p')$ belongs to the exceptional set $\mathcal E_{N_k}$; and there are many elements in the interval $\frac{I}{\max\set{p,p'}}$ and $(p,p')$ is out of $\mathcal E_{N_k}$ so that (\ref{fij}) has $M^{-O(cA)}$ decay. We then complete the deduction by showing that in each case the contribution to (\ref{total}) is bounded by $O(1)$.

\end{proof}

\subsection{When $p$ is large}

The task of this subsection is to prove the following lemma. And, clearly, Theorem \ref{main} follows from  this lemma and Lemma \ref{pissmall}.

\begin{lemma}\label{pislarge}
With all parameters  the same as (\ref{decom}), we have
\[
\frac{\phi(W)}{WN}\sum_{U<p\leq N}\sum_{i,j}\sum_{pn\in W\cdot P_{i,j}+b}\log p\,f(p)f(n)F_{i,j}\bigbrac{g_i(\frac{pn-b}{W})\Lambda_i}\ll1.
\]	
\end{lemma}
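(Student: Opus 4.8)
The plan is to exploit that the cutoff $U=N^{2/3}$ forces the companion variable $n$ to be short: since $pn\in W\cdot P_{i,j}+b\subseteq[1,WN+b]$ and $p>U$, we have $n\ll WN/U=WN^{1/3}$. So after swapping the order of summation the inner sum runs over primes $p$ in a long interval while the outer variable $n$ is confined to a short range. First I would, as in the proof of Lemma \ref{pissmall}, split $p$ into dyadic ranges $p\sim P_0$ with $U<P_0\leq N$ (so $n$ lies in the dyadic range $N_0\asymp WN/P_0$), and decompose the congruence $pn\equiv Wa_{i,j}+b\pmod{Wq}$ into pairs of residue classes $p\equiv a_1$, $n\equiv a_2\pmod{Wq}$ with $a_1a_2\equiv Wa_{i,j}+b\pmod{Wq}$. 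Writing $p=a_1+Wqt$ turns $g_i\bigl(\tfrac{pn-b}{W}\bigr)$ into the polynomial $t\mapsto g_i(qnt+c_n)\in\poly(\Z,(H_i)_\bullet)$; writing $n=a_2+Wqs$ it becomes $s\mapsto g_i(qps+c'_p)$, a polynomial whose dilation $qp\asymp P_0q$ is large but whose span over the $s$-interval is still comparable to $N$. This dilated structure, together with the total $M^{-A}$-equidistribution of $(g_i(n))_{n\in[N]}$, is what we will leverage.

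Next I would remove the non-$1$-bounded weights by Cauchy--Schwarz. Since $\sum_{p\leq N}|f(p)|^2\log p\ll N$ by condition (\ref{lp2}), Cauchy--Schwarz in $p$ reduces matters to bounding
\[
\sum_{n,n'}f(n)\overline{f(n')}\sum_{p}\log p\,F_{i,j}\Bigl(g_i\bigl(\tfrac{pn-b}{W}\bigr)\Lambda_i\Bigr)\overline{F_{i,j}\Bigl(g_i\bigl(\tfrac{pn'-b}{W}\bigr)\Lambda_i\Bigr)},
\]
i.e.\ to controlling the correlation of the von Mangoldt function with the product nilsequence $p\mapsto F_{i,j}(g_i(\tfrac{pn-b}{W})\Lambda_i)\overline{F_{i,j}(g_i(\tfrac{pn'-b}{W})\Lambda_i)}$ uniformly over the short pairs $(n,n')$. (Alternatively one first dualises the short variable $n$; then the diagonal $n=n'$, equivalently $p=p'$, is harmless precisely because $p>N^{2/3}$ makes $\sum_{p\sim P_0}\tfrac{(\log p)^2|f(p)|^2}{p}\ll N^{1/3}\log N$ genuinely small by partial summation and (\ref{lp2}).) The heart of the matter is then an equidistribution statement for these product polynomials parallel to Lemma \ref{prose}: for all but a $\delta^{O(c)}$-proportion of the pairs, the product sequence $\bigl(g_i(\tfrac{pn-b}{W}),g_i(\tfrac{pn'-b}{W})\bigr)\in\poly(\Z,(H_i)_\bullet\times(H_i)_\bullet)$ is $\delta^{O(c)}$-equidistributed on the relevant interval. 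The proof runs exactly as for Lemma \ref{prose}: if it failed for many pairs, Proposition \ref{leibman} plus pigeonholing produces a single nontrivial horizontal character $\psi=\psi_1\oplus\psi_2$ with $\|\psi\circ g_{n,n'}\|_{C^\infty}$ small for many pairs; expanding in the binomial basis and running the same downward induction on the degree — using that multiplication by $n,n'$ (or $p,p'$) and by the bounded modulus $q$ only inflates denominators polynomially, and that the total span matches the scale $[N]$ on which $g_i$ is totally equidistributed — forces a common denominator $q'\ll\delta^{-O(c)}$ with $\|q'\alpha_j\|\ll\delta^{-O(c)}N^{-j}$, contradicting total equidistribution of $(g_i(n))_{n\in[N]}$. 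For the non-exceptional pairs, joint equidistribution combined with the orthogonality of the $W$-tricked von Mangoldt function to equidistributed polynomial nilsequences (Green--Tao \cite{GT10}, applied in the fixed residue class $p\equiv a_1\pmod{Wq}$, which plays the role of the $W$-trick) gives the saving; for the exceptional pairs the trivial bound together with the Leibman-produced Diophantine relation between $n$ and $n'$ (resp.\ $p$ and $p'$) shows their total weight is negligible. One then reassembles the dyadic ranges, residue classes and progressions $P_{i,j}$, using (\ref{lp2}) to sum the $\log p\,f(p)$-weights and (\ref{fl2})/(\ref{wl2}) to sum the $f(n)$-weights.

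The step I expect to be the main obstacle is this bilinear estimate, in two respects. First, the equidistribution-of-product-nilsequences argument has to be pushed through the \emph{dilated} polynomials $g_i(qnt+c_n)$, $g_i(qps+c'_p)$, whose lower-order coefficients are badly distorted by the large shifts $c_n,c'_p$; the downward induction on the degree must absorb these shifts cleanly, as in the proof of Lemma \ref{prose}. Second — and this is the genuinely delicate point — the bookkeeping has to be arranged so that the $(\log N)^{O(1)}$-many progressions $P_{i,j}$, residue classes, and dyadic ranges, together with the merely $L^2$-bounded weights, are all absorbed into an $O(1)$ bound rather than an $O((\log N)^{O(1)})$ one; this forces one to perform the Cauchy--Schwarz globally (not progression-by-progression) and is exactly why the cutoff is taken at the relatively large value $U=N^{2/3}$, so that the diagonal contribution is power-saving in $N$. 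Combining the resulting bound with Lemma \ref{pissmall} (and with Lemma \ref{mv} and the reduction of Sections 3--4) then yields Theorem \ref{main}.
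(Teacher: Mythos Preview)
Your proposal is correct and follows essentially the same route as the paper: dyadic decomposition in $p$, Cauchy--Schwarz in $p$ (using (\ref{lp2})) to produce a bilinear sum over $(n,n')$ with inner sum $\sum_p \log p\,\widetilde F_{i,j}(g_{i;n,n'}(p)\Lambda_i\times\Lambda_i)$, an equidistribution claim for the product polynomial $g_{i;n,n'}$ for all but an $M^{-O(cA)}$-proportion of pairs $(n,n')$ (proved exactly as you say, by rerunning the argument of Lemma~\ref{prose} with \cite[Lemma 3.3]{GT12b} in place of \cite[Lemma 8.4]{Mat}), and then the orthogonality of $\Lambda$ in the fixed residue class $p\equiv a_1\pmod{Wq}$ to equidistributed nilsequences --- the paper isolates this last step as a separate Lemma~\ref{Mangoldt} and proves it via Vaughan's identity as in \cite[Section 3]{GT12b}. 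One minor correction to your anticipated obstacle: the paper actually performs Cauchy--Schwarz \emph{locally} (for each fixed $k$, $(i,j)$, $a_1$, $a_2$) rather than globally, and this still closes because the saving $M^{-O(cA)}$ with $A$ chosen sufficiently large swallows the $(\log N)^{O(1)}$ factors coming from summing over dyadic ranges, progressions, and residue classes.
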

\begin{proof}
The proof, to some extent, is similar to Lemma \ref{pissmall}. We also start with splitting the interval $U<p\leq N$ into dyadic ranges, say $p\sim U_k$ with $U_k=2^kU$ and $0\leq k\leq\frac{\log N}{3\log 2}$. For a fixed number $0\leq k\leq\frac{\log N}{3\log 2}$ and a fixed pair $(i,j)$, consider the following representation
\[
\twosum{p\sim U_k}{p\equiv a_1\mrd{Wq}}\log p\,f(p)\twosum{\frac{Wa_{i,j}+b}{p}\leq n\leq\frac{WqX+Wa_{i,j}+b}{p}}{n\equiv a_2\mrd{Wq}}f(n)F_{i,j}\Bigbrac{g_i\bigbrac{\frac{pn-b}{W}}\Lambda_i}.
\]
Here, as before, letting $P_{i,j}=a_{i,j}+q\cdot[X]$, and both $a_1$ and $a_2$ run over the residue classes modulus $Wq$ and satisfy the relationship $a_1a_2\equiv Wa_{i,j}+b\mrd{Wq}$. The application of Cauchy-Schwarz inequality shows that it is bounded by
\begin{multline}\label{pnn'}
\biggbrac{\twosum{p\sim U_k}{p\equiv a_1\mrd{Wq}}\log p\,|f(p)|^2}^{1/2}\biggbrac{\twosum{\frac{Wa_{i,j}+b}{U_k}\leq n,n'\leq\frac{WqX+Wa_{i,j}+b}{U_k}}{n\equiv n'\equiv a_2\mrd{Wq}}f(n)f(n')\times
\\\times\twosum{p\sim U_k}{p\equiv a_1\mrd{Wq}}\log p\,F_{i,j}\Bigbrac{g_i\bigbrac{\frac{pn-b}{W}}\Lambda_i}\bar{F_{i,j}\Bigbrac{g_i\bigbrac{\frac{pn'-b}{W}}\Lambda_i}}}^{1/2}.	
\end{multline}
Condition (\ref{lp2}) implies that the first factor is bounded by $O(U_k^{1/2})$. After changing of variables, we see that the sum over $p$ in above second factor is indeed bounded by
\[
\sum_{m\sim\frac{U_k}{Wq}}\Lambda(Wqm+a_1)F_{i,j}\bigbrac{g_i(qmn+c_n)\Lambda_i}\bar{F_{i,j}\bigbrac{g_i(qmn'+c_{n'})\Lambda_i}},
\]
since the contribution of the higher powers is $O(U_k^{1/2+\eps})$  which is admissible in view of the estimate of the above expression, see (\ref{bound}).
When $(a_1,Wq)\neq1$, since $F_{i,j}$ is a bounded function, it is easy to see that in this case above expression is bounded by $\log U_k\cdot\#\set{p\sim U_k:p\equiv a_1\mrd{Wq}}\ll\log U_k$, thus the contribution is negligible.

 In the following, we can assume that $(a_1,Wq)=1$. A possible way to deal with this expression is to follow the idea of \cite[Lemma 9.5]{Mat}, to transfer $|\E_{m}\Lambda(Wqm+a_1)F(g'(m)\Gamma)|$ to $|\E_{m}F(g'(m)\Gamma)|$ with some  error term. In \cite[Lemma 9.5]{Mat} this error term is double logarithmic decay which is far too large than we expect. Indeed, one can check that to make Theorem \ref{main} has logarithmic decay, this error term should be strongly logarithmic decay. This seems achievable, because \cite[theorem 2.7]{TT} shows that the average of $\Lambda-\Lambda_{\mathrm{Siegel}}$ and polynomial nilsequences has pseudopolynomial decay. It, thus,  needs to calculate the average $\E_{n\in[N]}(\Lambda_{\mathrm{Siegel}}-1)(Wn+b)F(g'(n)\Gamma)$. \cite[Proposition 7.1]{TT} tells us that it is bounded by $O\bigbrac{M^{O(1)}q_{\mathrm{Siegel}}^{-O(1)}}$. Because in our case $M\ll(\log N)^{O(1)}$,  if $q_{\mathrm{Siegel}}\ll_A(\log N)^{-A}$, we have the chance to show that $\E_{n\in[N]}(\Lambda_{\mathrm{Siegel}}-1)(Wn+b)F(g'(n)\Gamma)$ is logarithmic decay. However, the assumption $q_{\mathrm{Siegel}}\ll_A(\log N)^{-A}$ will never happen in the case of  $W\ll(\log N)^{O(1)}$, on noting from the analysis surrounds formula (5.6) in \cite[P. 19]{TT}.\footnote{We would like to thank Joni Ter\"av\"ainen for explaining their paper to us.}

Our strategy is as follows. When $p$ is a large number, that is $p>U=N^{2/3}$, due to $pn,pn'\leq WN$, both $n$ and $n'$ should be small numbers. Inspired by the proof idea of Lemma \ref{pissmall}, when polynomial $g$ is highly equidistributed, we guess for almost all pairs of $(n,n')$ the product polynomial $\bigbrac{g(n\cdot),g(n'\cdot)}$ should be equidistributed. It, therefore, can be reduced to consider the correlation of an equidistributed nilsequence with the shifted von Mangoldt function. And this can be calculated by the so-called bi-linear form method adapted to polynomial nilsequences, see \cite[Section 3]{GT12b} as an example.

For technical reason, we would decompose the summation interval of $n,n'$ in the following representation into dyadic ranges, say $n,n'\sim V_l$ with $V_l=2^l\frac{Wa_{i,j}}{U_k}$ and $0\leq l\ll\log N$.
\begin{align}\label{splitnn'}
\sum_n f(n)f(n')\sum_{m\sim\frac{U_k}{Wq}}\Lambda(Wqm+a_1)F_{i,j}\bigbrac{g_i(qmn+c_n)\Lambda_i}\bar{F_{i,j}\bigbrac{g_i(qmn'+c_{n'})\Lambda_i}},
\end{align}
where both $n$ and $n'$ are supported in the interval $[\frac{Wa_{i,j}+b}{U_k},\frac{WqX+Wa_{i,j}+b}{U_k}]$ and subject to $n\equiv n'\equiv a_2\mrd{Wq}$.
Let $g_{i;n,n'}:\Z\to H_i/\Lambda_i\times H_i/\Lambda_i$ be the polynomial defined by
\[
g_{i;n,n'}(m)=\bigbrac{g_i(qnm+c_n),g_i(qn'm+c_{n'})}.
\]
\subsection*{Claim} There is a small constant $0<c<1$ such that the following statement holds uniformly for $V_l$ with $0\leq l\ll\log N$. Suppose that $\bigbrac{g(m)}_{m\sim\frac{U_k}{Wq}}$ is totally $M^{-A}$-equidistributed. When the pairs $(n,n')\in(V_l,2V_l]$ is outside of a subset of size bounded by $O(M^{-O(cA)}V_l^2)$, the corresponding product polynomial sequence $\bigbrac{g_{i;n,n'}(m)}_{m\sim\frac{U_k}{Wq}}$ is totally $M^{-O(cA)}$-equidistributed.

Since the treatment for every $i$ is the same, in the following we fix one such $i$ and omit the letter $i$. Similar to the proof of Lemma \ref{prose}, we assume that the sequence $\bigbrac{g_{n,n'}(m)}_{m\sim\frac{U_k}{Wq}}$ fails to be $M^{-O(cA)}$-equidistributed. Then from quantitative Leibman theorem (Theorem \ref{leibman}), there is a non-trivial horizontal character of modulus $0<|\psi|\ll M^{-O(cA)}$ such that
\[
\norm{\psi\circ g_{n,n'}}_{C^\infty[\frac{U_k}{Wq},\frac{2U_k}{Wq}]}\ll M^{-O(cA)}.
\]
One can see from the proof of Lemma \ref{prose} that this contradicts with the assumption $\bigbrac{g(m)}_{m\sim\frac{U_k}{Wq}}$ is $M^{-A}$-equidistributed when we take $0<c<1$ sufficiently small. Just need to pay attention to one thing, the prime pairs $(p,p')$ in Lemma \ref{prose} are replaced by integer pairs $(n,n')$ here, but this doesn't matter because it can be achieved by replacing \cite[Lemma 8.4]{Mat} by \cite[Lemma 3.3]{GT12b} in the proof. The claim then follows by an application of \cite[Lemma 7.2]{Mat}, which asserts that if $(g(m))_{m\in[N]}$ is $\delta$-equidistributed, then there is a number $0<c<1$ such that $(g(m))_{m\in[N]}$ is totally $\delta^c$-equidistributed.

Now we write $\widetilde F_{i,j}(\gamma,\gamma')=F_{i,j}(\gamma)\bar{F_{i,j}(\gamma')}$ as a Lipschitz function supported on $H_i/\Lambda_i\times H_i/\Lambda_i$. Then we have the  information that $\int_{H_i/\Lambda_i\times H_i/\Lambda_i}\widetilde F_{i,j}=0$ and $\|\widetilde F\|_\lip\ll M^{-O(1)}$. Next, we aim   is to prove  when $\bigbrac{g_{i;n,n'}(m)}_{m\sim\frac{U_k}{Wq}}$ is totally $M^{-O(cA)}$-equidistributed, the following inequality holds
\begin{align}\label{lastone}
\sum_{m\sim\frac{U_k}{Wq}}\Lambda(Wqm+a_1)\widetilde F_{i,j}(g_{i;n,n'}(m)\Lambda_i\times\Lambda_i)\ll M^{-O(cA)}\frac{U_k}{Wq}.	
\end{align}
Here, $a_{1}$ is less than $Wq$ and coprime with $Wq$. This, indeed, follows from the next lemma. And a brief proof of the below lemma will be given after the proof of Lemma \ref{pislarge}.

\begin{lemma}[von Mangoldt function disjoints with equidistributed nilsequences]\label{Mangoldt}
Let $0<\delta<1/2$ be a parameter. Suppose that $G/\Gamma$ is a nilmanifold of finite dimension $m_G$, $G_\bullet$ is a filtration of $G/\Gamma$ and of degree $d$, and $\mathcal Y$ is a $\delta^{-O(1)}$-rational Mal'cev basis adapted to $G_\bullet$. Suppose that $g\in\poly(\Z,G/\Gamma)$ is a polynomial and $(g(n))_{n\in[N]}$ is totally $\delta$-equidistributed. Then for any coprime pair $1\leq b\leq W\ll\delta^{-O(1)}$ and any Lipschitz function $F:\Z\to\C$ with $\int_{G/\Gamma}F=0$ and $\norm{F}_\lip\ll\delta^{-O(1)}$, we have
\[
\E_{n\in[N]}\Lambda(Wn+b)F(g(n)\Gamma)\ll\delta^{-O(1)}.
\]

\end{lemma}

We now splice all the ingredients to finish the proof of Lemma \ref{pislarge}. For a fixed number $0\leq l\ll\log N$, divide the pairs $(n,n')\in(V_l,2V_l]^2$ into two sets. One is that (\ref{lastone}) holds for each pair $(n,n')$ in this set, and we call the other one as the exceptional set $\mathcal E_l$. In view of the claim, the size of the exceptional set $\mathcal E_l$ is at most $O(M^{-O(cA)}V_l^2)$, besides, for $(n,n')\in\mathcal E_l$, we notice that
\begin{align}\label{bound}
\sum_{m\sim\frac{U_k}{Wq}}\Lambda(Wqm+a_1)\widetilde F_{i,j}(g_{i;n,n'}(m)\Lambda_i\times\Lambda_i)\ll\sum_{m\sim\frac{U_k}{Wq}}\Lambda(Wqm+a_1)\ll\frac{U_k}{\phi(Wq)},
\end{align}
as $\widetilde F_{i,j}$ is bounded and $(Wq,a_1)=1$. One can now see from (\ref{lastone}), together with above information for exceptional set, that (\ref{splitnn'}) is bounded by
\[
\sum_{0\leq l\ll\log N}\Bigset{M^{-O(cA)}\frac{U_k}{Wq}\sum_{n,n'\sim V_l}|f(n)||f(n')|+\frac{U_k}{\phi(Wq)}\sum_{(n,n')\in(V_l,2V_l]^2}|f(n)||f(n')|1_{\mathcal E_l}(n,n')}.
\]
We now compute above two sum terms in turns.  Summing over the dyadic intervals $(V_l,2V_l]$, and then using Cauchy-Schwarz inequality and condition (\ref{fl2}),  one has the first term is bounded by
\begin{align*}
&\qquad M^{-O(cA)}\frac{U_k}{Wq}\biggbrac{\sum_{\frac{Wa_{i,j}+b}{U_k}\leq n\leq\frac{WqX+Wa_{i,j}+b}{U_k}}|f(n)|}^2\\
&\ll M^{-O(cA)}\frac{U_k}{Wq}\biggbrac{\sum_{n\leq\frac{WqX+Wa_{i,j}+b}{U_k}}|f(n)|-\sum_{n\leq\frac{Wa_{i,j}+b}{U_k}}|f(n)|}^2\ll M^{-O(cA)}\frac{WqX^2}{U_k}.
\end{align*}
Besides, Cauchy-Schwarz inequality and condition (\ref{fl2}) and summing over the dyadic intervals $(V_l,2V_l]$ also shows that the second term is bounded by
\begin{multline*}
\frac{U_k}{\phi(Wq)}\Bigbrac{\sum_{l\ll\log N}\sum_{n\in V_l}|f(n)|^2}\Bigbrac{\sum_{l\ll\log N}\sum_{(n,n')\in(V_l,2V_l]^2}1_{\mathcal E_l}(n,n')^2}^{1/2}\\
\ll	\frac{U_k}{\phi(Wq)}\biggbrac{\sum_{n\leq\frac{WqX+Wa_{i,j}+b}{U_k}}|f(n)|^2-\sum_{n\leq\frac{Wa_{i,j}+b}{U_k}}|f(n)|^2}(\log N)^{1/2}|\mathcal E_l|^{1/2}\\\ll M^{-O(cA)}(\log N)^{1/2}\frac{NX}{U_k}.
\end{multline*}

Recalling that $X\gg\frac{N}{qM^2}$ and $\log N\leq M\ll(\log N)^{O_A(1)}$, when we take $A>1$ sufficiently large, the factors $(\log N)^{1/2}$ and $N/X=qM^2$ can be absorbed by $M^{-O(cA)}$, thus, we can assume that both of the above two terms are bounded by $O\bigbrac{M^{-O(cA)}\frac{X^2}{U_k}}$. Substitute it into (\ref{pnn'}), as well as recalling that the first factor in (\ref{pnn'}) is no more than $O(U_k^{1/2})$, we can conclude that (\ref{pnn'}) is bounded by $O(M^{-O(cA)}X)$.  Splicing the intervals $(U_k,2U_k]$ with $0\leq k\leq\frac{\log N}{3\log2}$, one therefore has
\begin{multline*}
\frac{\phi(W)}{WN}\sum_{U<p\leq N}\sum_{i,j}\sum_{pn\in W\cdot P_{i,j}+b}\log p\,f(p)f(n)F_{i,j}\bigbrac{g_i(\frac{pn-b}{W})\Lambda_i}\\
\ll	\frac{\phi(W)}{WN}\sum_{k\ll\log N}\sum_{i,j}\twosum{a_1,a_2\mrd{Wq}}{a_1a_2\equiv Wa_{i,j}+b\mrd{Wq}}M^{-O(cA)}X\ll1,
\end{multline*}
as $X=\Theta(\frac{N}{qM^2})$ and letting $A>1$ sufficiently large. Therefore, we complete the proof of this lemma.

\end{proof}

\noindent\emph{A brief proof of Lemma \ref{Mangoldt}.}

On recalling the well-known Vaughan's identity \cite{Vau}
\[
\Lambda(n)=\Lambda(n) 1_{n \leq N^{1 / 3}}-\sum_{d \leq N^{2 / 3}} a_{d} 1_{d \mid n}+\sum_{d \leq N^{1 / 3}} \mu(d) 1_{d \mid n} \log \frac{n}{d}+\sum_{d, w>N^{1 / 3}} \Lambda(d) b_{w} 1_{d w=n},
\]
where $a_d=\sum_{bc=d:b,c\leq N^{1/3}}\mu(b)\Lambda(c)$ and $b_w=\sum_{c|w:c>N^{1/3}}\mu(c)$. The first term is a negligible one, we call the second and fourth term as Type I sum and  Type II sum respectively, besides, the third one can be expressed as a convex combination of Type I sum by using the identity
\[
\log \frac{n}{d}=\log N-\int_{1}^{N} 1_{t>n} \frac{d t}{t}-\log d,
\]
and then by absorbing all the various logarithmic factors into the divisor-bounded coefficients.

Therefore, the expression $\sum_{n\in[N]}\Lambda(Wn+b)F(g(n)\Gamma)$ can be expressed as follows,
\[
\twosum{n\in P}{n\leq N^{1/3}}\Lambda(n)F\Bigbrac{g\bigbrac{\frac{n-b}{W}}\Gamma}+\sum_{n\in P}\twosum{d\leq N^{2/3}}{d|n}a_dF\Bigbrac{g\bigbrac{\frac{n-b}{W}}\Gamma}+\twosum{dw\in P}{d,w\geq N^{1/3}}a_db_wF\Bigbrac{g\bigbrac{\frac{dw-b}{W}}\Gamma},
\]
where $P=b+W\cdot[N]$ is the arithmetic progression,  $a_d\ll (\log N)^{O(1)}\tau^{O(1)}(d)$, and $b_w\ll (\log N)^{O(1)}\tau^{O(1)}(w)$. Plainly, the first term is bounded by $O(N^{1/2})$ (say) so is negligible, and the latter two terms can be dealt with in the same manner of \cite[section 3]{GT12b}, also of \cite[Section 7]{TT}.

\vspace{2mm}

\qed

\section{Proof of Theorem \ref{lfunction} and Theorem \ref{mulfunction}}

We first give some standard facts about $L$-functions which can be found in \cite[Section 2]{RS} and \cite[Chapter 5]{IK}.

Let $m\geq 2$ be an integer, $\pi=\otimes_p \pi_p$ be a normalized irreducible cuspidal automorphic representations of $GL_m$ over $\Q$, which means that $\pi$ has unitary central character.
The $L$-function $L(s,\pi)$ associated to $\pi$ is defined by
\[L(s,\pi)=\sum_{n=1}^{\infty}\frac{\lambda_\pi(n)}{n^s}=\prod_p\prod_{j=1}^{m}\left(1-\frac{\alpha_{j,\pi}(p)}{p^s}\right)^{-1},\]
for some suitable complex numbers $\alpha_{j,\pi}(p)$. The Dirichlet series converge absolutely for $\Re(s)>1$.

There is also an archimedean local factor $L(s, \pi_\infty)$. There are $m$ complex Langlands parameters $\mu_\pi(j)$, we define
\[L(s, \pi_\infty)=\pi^{-\frac{m s}{2}} \prod_{j=1}^{m} \Gamma\left(\frac{s+\mu_{\pi}(j)}{2}\right).\]
The generalized Ramanujan conjecture and Selberg conjecture assert that
\[|\alpha_{j,\pi}(p)|\leq 1 \ \textrm{and} \ \Re(\mu_{\pi}(j))\geq 0.\]
The Ramanujan conjecture was proved by Deligne \cite{Del} for holomorphic cusp form on $GL_2$. According to Luo, Rudnick and Sarnak \cite{LRS} and M\"uller and
Speh \cite{MS}, we know that
\begin{equation}\label{bound-alpha}
|\alpha_{j,\pi}(p)|\leq p^{\frac{1}{2}-\frac{1}{m^2+1}} \ \textrm{and} \ \Re(\mu_{\pi}(j))\geq -(\frac{1}{2}-\frac{1}{m^2+1}).
\end{equation}

In order to define the functional equation for $L(s,\pi)$, we give the contragredient of $\pi$. Let $\widetilde{\pi}$ be the contragredient of $\pi$ which is also an irreducible cuspidal automorphic representations of $GL_m$ over $\Q$. For each $p\leq \infty$, we have
\[\big\{\alpha_{j, \pi}(p): 1 \leq j \leq m\big\}=\big\{\overline{\alpha_{j, \pi}(p)}: 1 \leq j \leq m\big\}\]
and
\[\big\{\mu_{\widetilde{\pi}}(j): 1 \leq j \leq m\big\}=\big\{\overline{\mu_{\pi}(j)}: 1 \leq j \leq m\big\}.\]
Define the completed $L$-function
\[\Lambda(s, \pi)=N_{\pi}^{s / 2} L(s, \pi) L\left(s, \pi_{\infty}\right),\]
where $N_\pi$ be the conductor of $\pi$. Then, $\Lambda(s, \pi)$ extends to an entire function and satisfies the functional equation
\[\Lambda(s, \pi)=\xi (\pi) \Lambda(1-s, \tilde{\pi}),\]
where $\xi(\pi)$ is a complex number of modulus 1.

Let $\chi$ be a primitive Dirichlet character modulo $q$. Then the twisted $L$-function is defined by
\[L(s, \pi \times \chi)=\prod_{p} \prod_{j=1}^{m}\left(1-\frac{\alpha_{j, \pi \times \chi}(p)}{p^{s}}\right)^{-1}.\]
When $p\nmid q$, we have
\[\left\{\alpha_{j, \pi \times \chi}(p): 1 \leq j \leq m\right\}=\left\{\alpha_{j, \pi}(p) \chi(p): 1 \leq j \leq m\right\}.\]
Thus we get
\begin{equation}\label{Lspichi}
	\begin{aligned}
		\sum_{n=1}^{\infty} \frac{\lambda_{\pi}(n) \chi(n)}{n^{s}} &=\prod_{p} \prod_{j=1}^{m}\left(1-\frac{\alpha_{j, \pi}(p) \chi(p)}{p^{s}}\right)^{-1} \\
		&=L(s, \pi \times \chi) \prod_{p \mid q} \prod_{j=1}^{m}\left(1-\frac{\alpha_{j, \pi \times \chi }(p)}{p^{s}}\right).
	\end{aligned}
\end{equation}
We also need the following convexity bound for $L(s,\pi\times\chi)$, see \cite[Lemma 3.2]{JLW},
\begin{equation}\label{convbound-l}
	L(\frac{1}{2}+it,\pi\times\chi)\ll (q(1+|t|))^{\frac{m}{4}+\varepsilon}.
\end{equation}

Next, we prove that $\lambda_\pi (n)\in \mathcal M^\prime$ with $W=1$, then Theorem \ref{lfunction} directly follows from Theorem \ref{main}. The conditions (\ref{fl2}) and (\ref{lp2}) can be easily be deduced
from the Rankin-Selberg theory and the prime number theorem of Rankin-Selberg
L-functions, see \cite[(5.2) and Page 631]{JLW}, i.e.,
\begin{equation}\label{lampi-l2}
	\sum_{n \leq N}\left|\lambda_{\pi}(n)\right|^{2} \ll N,
\end{equation}
and
\begin{equation}\label{bound-lam-p}
	\sum_{p \leq N}\left|\lambda_{\pi}(p)\right|^{2} \log p \ll  N.
\end{equation}

Then, we just need to prove that $\lambda_\pi(n)$ satisfy condition (\ref{w-equi}) with $W=1$.
Take $P=\{n\equiv b (\bmod q), n\leq N\}$ with $q\leq (\log N)^C$, we consider the following sum
\[	\sum_{n\leq N \atop n\equiv b(\bmod q)}\lambda_{\pi}(n).\]
We will consider the following two cases according to $(b,q)=1$ or $(b,q)>1$.

Case I: $(b,q)=1.$ Applying the orthogonality of Dirichlet characters, we have
\begin{equation}\label{lam-arithprog}\sum_{n\leq N \atop n\equiv b(\bmod q)}\lambda_{\pi}(n)=\frac{1}{\varphi(q)}\sum_{\chi\bmod q}\bar{\chi}(b)\sum_{n\leq N }\lambda_{\pi}(n)\chi(n).\end{equation}
It suffices to estimate the sum
\begin{equation}\label{pichi}
	\sum_{n \leq N} \lambda_{\pi}(n) \chi(n)
\end{equation}
for any $\chi(\bmod q)$. This is the same with \cite[(5.4)]{JLW}, they proved that

\begin{equation}\label{bound-lamchi}
	\sum_{n\leq N}\lambda_{\pi}(n)\chi(n)\ll q^{\frac{m}{2m+4}+\varepsilon}N^{\frac{m+1}{m+2}+\varepsilon}.
\end{equation}
Taking (\ref{bound-lamchi}) into (\ref{lam-arithprog}), we get
\begin{equation}\label{bound-lambdarith}
\sum_{n\leq N \atop n\equiv b(\bmod q)}\lambda_{\pi}(n)\ll q^{\frac{m}{2m+4}+\varepsilon}N^{\frac{m+1}{m+2}+\varepsilon}.
\end{equation}
This is also true for $q=1.$ We confirm that $\lambda_{\pi}(n)$ satisfy condition (\ref{w-equi}) with $W=1$.

Case II: $(b,q)>1$. First, we consider that $n$ runs over square-free numbers. Let $d=(b,q), \ b=b^\prime d$ and $q=q^\prime d$, then $(b^\prime,q^\prime)=1$ and $(d,n/d)=1$.  Denote $\chi_d$  be the principal character modulo $d$. Then we have
\begin{equation}\label{equ-squarefree}
\begin{aligned}
	\threesum{n \leq N}{n \equiv b(\bmod q)}{n \ \textrm{square-free}} \lambda_{\pi}(n)&=\lambda_{\pi}(d) \quad \threesum{l \leq N / d }{l\equiv b^\prime (\bmod q^\prime)} {l \ \textrm{square-free}}\quad \lambda_{\pi}(l)\chi_d(l)\\
	&=\frac{\lambda_{\pi}(d)}{\varphi(q^\prime)}\sum_{\chi\bmod q^\prime }\bar{\chi}(b^\prime)\twosum{l \leq N / d}{l \ \textrm{square-free}} \lambda_{\pi}(l)(\chi\chi_d)(l)
	\end{aligned}
\end{equation}
Since $\chi\chi_{d}$ is a character modulo $q^\prime d=q$, we use (\ref{bound-lamchi}) to the inner sum, we have
\begin{equation}\label{bound-squarefree}
	\threesum{n\leq N}{ n\equiv b(\bmod q)}{n \ \textrm{square-free}}\lambda_\pi(n)\ll q^{\frac{m}{2m+4}+\varepsilon}N^{\frac{m+1}{m+2}+\varepsilon}.
\end{equation}
 Here we use $\lambda_{\pi}(d)\ll d^{\frac{1}{2}-\frac{1}{m^2+1}+\varepsilon}.$

 Next we remove the restriction that $n$ is square-free. Any arbitrary positive integer $n$ can be represented in a unique way as
 \[n=k l, \quad k \text { is square-free, } \quad l \text { is square-full, } \quad(k, l)=1.\]
 Hence we have
 \begin{equation*}
 	\begin{aligned}
 		\sum_{n \leq N \atop n \equiv b(\bmod q)} \lambda_{\pi}(n)&=\threesum{l k \leq N}{lk \equiv b(\bmod q)}{l \ \textrm{square-full}, \ k \ \textrm{square-free}} \lambda_{\pi}(l) \lambda_{\pi}(k)=\sum_{l \leq N \atop l \textrm{ square-full }} \lambda_{\pi}(l) \threesum{k \leq N / l }{m k \equiv b(\bmod q)}{k \ \textrm{square-free}} \lambda_{\pi}(k)\\
 		&=\sum_{l \leq N\atop 	l \ \textrm{square-full}} \lambda_{\pi}(l)  \threesum{k \leq N / l }{ k \equiv \frac{b}{(l, q)} \overline{\frac{l}{(l, q)}}\left(\bmod \frac{q}{(l, q)}\right)}{k \ \textrm{square-free}} \lambda_{\pi}(k).  	
 	\end{aligned}
 \end{equation*}
 We split the sum over $l$ into two parts: $1\leq l\leq N^{4/5}$ and $N^{4/5}< l\leq N$. If $1\leq l\leq N^{4/5}$, then $N/l\geq N^{1/5}$. We use (\ref{bound-squarefree}) for the inner sum, we have
 \begin{equation}\label{lsmall}
 \threesum{n \leq N}{n \equiv b(\bmod q)}{l\leq N^{4/5}, \ l \ \textrm{square-full}} \lambda_{\pi}(n)\ll q^{\frac{m}{2m+4}+\varepsilon}N^{\frac{m+1}{m+2}+\varepsilon}\sum_{l \leq N^{4/5}\atop 	l\  \textrm{square-full}} \frac{|\lambda_{\pi}(l)|}{l^{\frac{m+1}{m+2}+\varepsilon}}\ll q^{\frac{m}{2m+4}+\varepsilon}N^{\frac{5m+9}{5m+10}+\varepsilon}.
 \end{equation}
 Here we use (\ref{lampi-l2}) and the Cauchy-Schwarz inequality to get
 \[\sum_{l \leq N^{4/5}\atop 	l \ \textrm{ square-full}} \frac{|\lambda_{\pi}(l)|}{l^{\frac{m+1}{m+2}+\varepsilon}}\ll N^{\frac{4}{5(m+2)}}\left(\sum_{l\leq N^{4/5}}\frac{1}{l}\right)^\frac{1}{2}\left(\sum_{l\leq N^{4/5}}\frac{|\lambda_{\pi}(l)|^2}{l}\right)^\frac{1}{2}\ll N^{\frac{4}{5(m+2)}}\log N. \]

 If $N^{4/5}<l\leq N$, we use (\ref{lampi-l2}) and the Cauchy-Schwarz inequality
\begin{equation}\label{llarge}
	\begin{aligned}
	\threesum{n \leq N}{n \equiv b(\bmod q)}{N^{4/5}<l\leq N, \ l\ \textrm{square-full}} \lambda_{\pi}(n)&\ll \sum_{N^{4/5}<l \leq N \atop l \ \textrm{square-full}}|\lambda_{\pi}(l)|\twosum{k\leq N^{1/5}}{k\ \textrm{square-free}}|\lambda_{\pi}(k)|\\
	&\ll \left(\sum_{l \leq N \atop l \ \textrm{square-full}}1\right)^{1/2}\left(\sum_{l \leq N}|\lambda_{\pi}(l)|^2\right)^{1/2}\left(\sum_{k \leq N^{1/5} }1\right)^{1/2}\left(\sum_{k \leq N^{1/5} }|\lambda_{\pi}(k)|^2\right)^{1/2}\\
	&\ll N^{19/20}.
	\end{aligned}
\end{equation}
From (\ref{lsmall}) and (\ref{llarge}), we have
\[\sum_{n \leq N \atop n \equiv b(\bmod q)} \lambda_{\pi}(n)\ll q^{\frac{m}{2m+4}+\varepsilon}N^{\frac{5m+9}{5m+10}+\varepsilon}.\]
 We confirm that $\lambda_{\pi}(n)$ satisfy condition (\ref{w-equi}). This completes the proof of Theorem \ref{lfunction}.

 In order to prove Theorem \ref{mulfunction}, we prove that $\mu(n)\lambda_\pi (n)\in \mathcal M^\prime$ with $W=1$. From (\ref{lampi-l2}) and (\ref{bound-lam-p}), we know that
\begin{equation}
\sum_{n \leq N}\left|\mu(n)\lambda_{\pi}(n)\right|^{2}\leq	\sum_{n \leq N}\left|\lambda_{\pi}(n)\right|^{2} \ll N,
\end{equation}
and
\begin{equation}
	\sum_{p \leq N}\left|\mu(p)\lambda_{\pi}(p)\right|^{2} \log p \ll  N.
\end{equation}
This says that $\mu(n)\lambda_{\pi}(n)$ satifies conditions (\ref{fl2}) and (\ref{lp2}).

Next we consider the following sum
\[	\sum_{n\leq N \atop n\equiv b(\bmod q)}\mu(n)\lambda_{\pi}(n),\]
with $q\leq (\log N)^C$. We use the recent result of Jiang, L\"u and Wang \cite[Section 5]{JLW21}, for $\pi$ is self-dual and $\pi \ncong \pi\otimes\chi$ for any quadratic primitive character $\chi$, they prove that
\begin{equation}\label{bound-mupichi}
	\sum_{n \leqslant N} \mu(n) \lambda_{\pi}(n) \chi(n) \ll N \exp \left( -c \sqrt{\log N}\right),
\end{equation}
for any Dirichlet character $\chi(\bmod \ q)$, where $c>0$ is a constant.

 Let $d=(b,q), \ b=b^\prime d$ and $q=q^\prime d$, then $(b^\prime,q^\prime)=1$. Denote $\chi_d$ be the principal character modulo $d$. Then we have
 \begin{equation}
 	\begin{aligned}
 		\sum_{n\leq N\atop n\equiv b(\bmod q)} \mu(n)\lambda_{\pi}(n) &=\sum_{n\leq N/d \atop n \equiv b^\prime\left(\bmod q^\prime\right)} \mu(dn)\lambda_{\pi}\left(d n\right) \\
 		&= \mu(d)\lambda_{\pi}(d) \sum_{n\leq N/d\atop n\equiv b^\prime (\bmod q^\prime)} \mu(n)\lambda_{\pi}\left(n\right) \chi_{d}\left(n\right) \\
 		&=\frac{\mu(d)\lambda_{\pi}(d)}{\varphi\left(q^\prime\right)} \sum_{\chi\left(\bmod q^\prime\right)} \bar{\chi}\left(b^\prime\right) \sum_{n \leqslant N / d}\mu(n) \lambda_{\pi}\left(n\right) \left(\chi \chi_{d}\right)\left(n\right)
 	\end{aligned}
 \end{equation}
 Since $\chi\chi_d$ is a character modulo $q^\prime d=q$, we use (\ref{bound-mupichi}) to the inner sum, we have
 \[\sum_{n\leq N\atop n\equiv b(\bmod q)} \mu(n)\lambda_{\pi}(n)\ll N\exp \left( -c \sqrt{\log (N/q)}\right).\]
 Here we use $\lambda_{\pi}(d)\ll d^{\frac{1}{2}-\frac{1}{m^2+1}+\varepsilon}$. We confirm that $\mu(n)\lambda_{\pi}(n)$ satisfy condition (\ref{w-equi}) with $W=1$. This completes the proof of Theorem \ref{mulfunction}.

\bibliographystyle{plain} 

\renewcommand{\bibname}{} 

\bibliography{discorrelation}

\end{document}